    \newlist{UR}{enumerate}{1}
    \setlist[UR]{label=(H\arabic*)}
\newcommand{\dom}{D}
\newcommand{\ran}{R}
\newcommand{\R}{\mathbb{R}}
\crefname{assumption}{Assumption}{Assumptions}
\crefname{remark}{Remark}{Remarks}
\crefname{example}{Example}{Examples}
\title{Convergence Analysis of the Nonoverlapping Robin--Robin Method for Nonlinear Elliptic Equations\thanks{Version of  \today.
\funding{This work was supported by the Swedish Research Council under the grant 2019--
05396.}}}
\author{Emil Engstr\"om\thanks{Centre for Mathematical Sciences, Lund University,
P.O.\ Box 118, SE-22100 Lund, Sweden, (\email{emil.engstrom@math.lth.se}).}
\and Eskil Hansen\thanks{Centre for Mathematical Sciences, Lund University,
P.O.\ Box 118, SE-22100 Lund, Sweden, (\email{eskil.hansen@math.lth.se}).}}
\begin{document}

\maketitle

\begin{abstract}
We prove convergence for the nonoverlapping Robin--Robin method applied to nonlinear elliptic equations with a $p$-structure, including degenerate diffusion equations governed by the $p$-Laplacian. This nonoverlapping domain decomposition is commonly encountered when discretizing elliptic equations, as it enables the usage of parallel and distributed hardware. Convergence has been derived in various linear contexts, but little has been proven for nonlinear equations. Hence, we develop a new theory for nonlinear Steklov--Poincar\'e operators based on the $p$-structure and the $L^p$-generalization of the Lions--Magenes spaces. This framework allows the reformulation of the Robin--Robin method into a Peaceman--Rachford splitting on the interfaces of the subdomains, and the convergence analysis then follows by employing elements of the abstract theory for monotone operators. The analysis is performed on Lipschitz domains and without restrictive regularity assumptions on the solutions. 
\end{abstract}

\begin{keywords}
Robin--Robin method, Nonoverlapping domain decomposition, Nonlinear elliptic equation,
Convergence, Steklov--Poincar\'e operator
\end{keywords}

\begin{AMS}
 65N55, 65J15, 35J70, 47N20
\end{AMS}

\section{Introduction}  Approximating the solution of an elliptic partial differential equation (PDE) typically demands large-scale computations requiring the usage of parallel and distributed hardware. In this context, a nonoverlapping domain decomposition method is a suitable choice, as it can be implemented in parallel with local communication. After decomposing the equation’s spatial domain into nonoverlapping subdomains, the method consists of an iterative procedure that solves the equation on each subdomain and thereafter communicates the results via the boundaries to the adjacent subdomains. For a general introduction we refer to \cite{quarteroni,Widlund}. 

There is a vast amount of methods in the literature, employing different transmission conditions between the subdomains. The standard examples are based on the alternate use of Dirichlet and Neumann boundary conditions, but a competitive alternative is the Robin--Robin method, where the same type of Robin boundary condition is used for all subdomains. 
The Robin--Robin method was introduced in~\cite{lions3} together with a convergence proof when applied to linear elliptic equations. After applying a finite element discretization, convergence rates of the form $1-\mathcal{O}(\sqrt{h})$, with $h$ denoting the mesh width, have been derived in various linear contexts~\cite{GuoHou03,Lui09,XuQin10}; also see~\cite{Gander01}. For generalizations and further applications of the Robin--Robin method applied to linear PDEs we refer to~\cite{Chen14,GuoHou03} and references therein. 

When considering nonlinear elliptic PDEs the literature is more limited. Convergence studies relating to overlapping Schwarz methods are given in~\cite{dryja97,tai98,tai02}. However, there are hardly any results dealing with nonoverlapping domain decomposition schemes. One exception is~\cite{berninger11}, where the convergence of the Dirichlet--Neumann and Robin--Robin methods are analyzed for a family of one-dimensional elliptic equations. A related study is~\cite{schreiber}, where the equivalence between a class of nonlinear elliptic equations and the corresponding transmission problems are proven for nonoverlapping decompositions with cross points, but no numerical scheme is considered. Apart from~\cite{tai02}, all these nonlinear studies rely on frameworks similar to the linear case, e.g., assuming  that the diffusion is uniformly positive.

Hence, the aim of this paper is to derive a genuinely nonlinear extension of the linear convergence result given in~\cite{lions3} for the nonoverlapping Robin--Robin method. We will focus on nonlinear elliptic equations of the form
\begin{equation}\label{eq:class}
\left\{
     \begin{aligned}
            -\nabla\cdot\alpha(\nabla u)+g(u)&=f  & &\text{in }\Omega,\\
             u&=0 & &\text{on }\partial\Omega,
        \end{aligned}
\right.
\end{equation}
where $\Omega$ is a bounded domain in $\R^d$, $d=1,2,\ldots,$ with boundary $\partial\Omega$. The functions $\alpha$ and $g$  are assumed to have a $p$-structure; defined in \cref{sec:pstruct}. This structure enables a clear-cut convergence analysis for a broad family of degenerate elliptic equations, i.e., $\alpha(\nabla u)$ may vanish for nonzero values of $u$. The latter typically prevents the existence of a strong solution in $W^{2,p}(\Omega)$. 

The archetypical examples of nonlinear elliptic equations with a $p$-structure are those 
governed by the $p$-Laplacian, where $\alpha(z)=|z|^{p-2}z$. Examples include the computation of the nonlinear resolvent 
\begin{equation}\label{eq:resolvent}
      -\nabla\cdot(|\nabla u|^{p-2}\nabla u)+\lambda u=f,
\end{equation}
arising in the context of an implicit Euler discretization of the parabolic $p$-Laplace equation, and the nonlinear reaction-diffusion problem 
\begin{equation}\label{eq:eig}
        -\nabla\cdot(|\nabla u|^{p-2}\nabla u)+\lambda |u|^{p-2} u=f.
\end{equation}

For sake of simplicity, we decompose the original domain $\Omega$ into two nonoverlapping subdomains $\{\Omega_i,\text{ for }i=1,2\}$, with boundaries denoted by $\partial\Omega_i$, and separated by the interface $\Gamma$, i.e., 
\begin{displaymath}
\overline{\Omega}=\overline{\Omega}_1\cup\overline{\Omega}_2,\quad \Omega_1\cap\Omega_2=\emptyset\quad\text{and}\quad\Gamma=(\partial\Omega_1\cap\partial\Omega_2)\setminus\partial\Omega. 
\end{displaymath}
Two examples of such decompositions are illustrated in~\cref{fig:sub1,fig:sub2}, respectively. The analysis presented here can also, in a trivial fashion, be extended to the case when $\Omega_i$ is a family of nonadjacent subdomains, e.g., the stripewise domain decomposition illustrated in~\cref{fig:sub3}.
\begin{figure}
\centering
\begin{subfigure}{.4\textwidth}
  \centering
  \includegraphics[width=.7\linewidth]{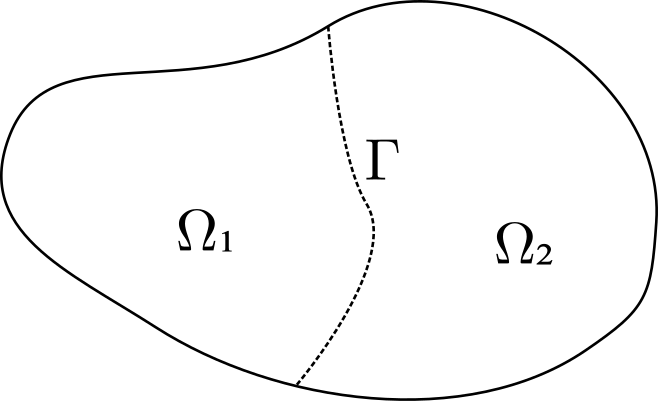}
  \caption{}
  \label{fig:sub1}
\end{subfigure}%
\begin{subfigure}{.4\textwidth}
  \centering
  \includegraphics[width=.7\linewidth]{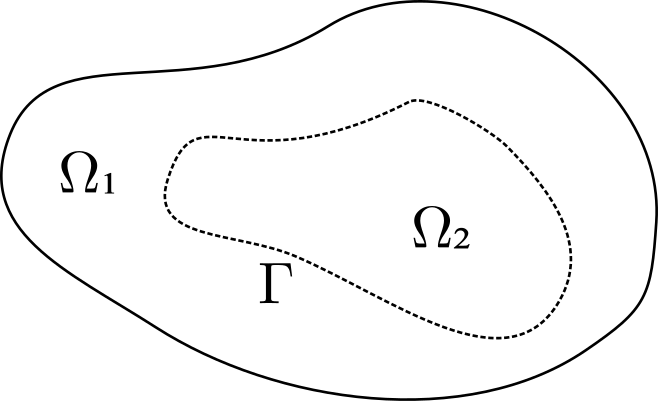}
  \caption{}
  \label{fig:sub2}
\end{subfigure}
\begin{subfigure}{.4\textwidth}
  \centering
  \includegraphics[width=.7\linewidth]{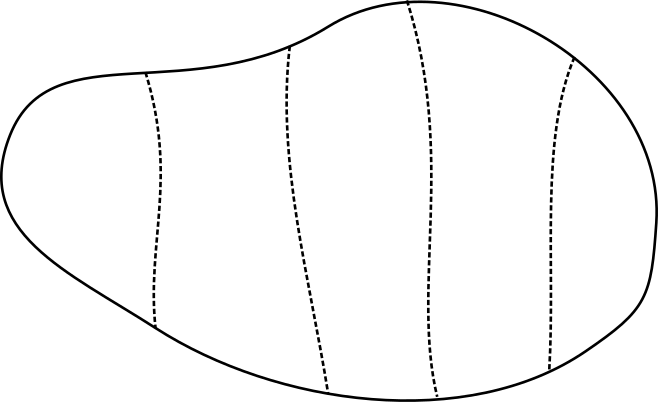}
  \caption{}
  \label{fig:sub3}
\end{subfigure}%
\begin{subfigure}{.4\textwidth}
  \centering
  \includegraphics[width=.7\linewidth]{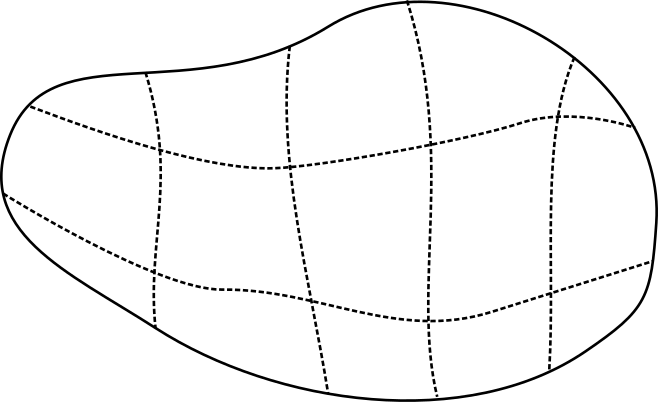}
  \caption{}
  \label{fig:sub4}
\end{subfigure}
\caption{Examples of different domain decompositions: (a) illustrates a domain decomposition with two intersection points; (b) a decomposition without intersection points; (c) a stripewise decomposition without cross points; (d) a full decomposition with cross points.}
\label{fig:test}
\end{figure}
The strong form of the Robin--Robin method applied to \cref{eq:class} is then to find $(u^n_1,u^n_2)$, for $n=1,2,\ldots,$ such that
\begin{equation}\label{eq:RobinStrong}
\left\{
     \begin{aligned}
            -\nabla\cdot\alpha(\nabla u^{n+1}_1)+g(u^{n+1}_1)&=f  & &\text{in }\Omega_1,\\
             u^{n+1}_1&=0 & &\text{on }\partial\Omega_1\setminus\Gamma,\\
             \alpha(\nabla u^{n+1}_1)\cdot\nu_1 + s u^{n+1}_1 &=  \alpha(\nabla u^n_2)\cdot\nu_1 + s u^n_2 & &\text{on }\Gamma,\\[5pt]
 	     -\nabla\cdot\alpha(\nabla u^{n+1}_2)+g(u^{n+1}_2)&=f  & &\text{in }\Omega_2,\\
             u^{n+1}_2&=0 & &\text{on }\partial\Omega_2\setminus\Gamma,\\
             \alpha(\nabla u^{n+1}_2)\cdot\nu_2 + s u^{n+1}_2 &=  \alpha(\nabla u^{n+1}_1)\cdot\nu_2 + s u^{n+1}_1 & &\text{on }\Gamma,
        \end{aligned}
\right.
\end{equation}
where $\nu_i$ denotes the unit outward normal vector of $\partial\Omega_i$, $u_2^0$ is a given initial guess and $s>0$ is a method parameter. 
Here, $u^n_i$ and $u^n_i|_{\Gamma}$ approximate the solution $u$ restricted to $\Omega_i$ and $\Gamma$, respectively. 

The convergence analysis is organized as follows. For linear elliptic equations, i.e., equations with a $2$-structure, the analysis relies on the trace operator from $H^1(\Omega_i)$ onto $H^{1/2}(\partial\Omega_i)$, and the Lions--Magenes spaces $H^{1/2}_{00}(\Gamma)$. We therefore start by introducing the generalized $p$-version of the trace operator, now given from $W^{1,p}(\Omega_i)$ onto $W^{1-1/p,p}(\partial\Omega_i)$, and the corresponding Lions--Magenes spaces $\Lambda_i$; see~\cref{sec:functionspaces,sec:LM}. There is a surprising lack of proofs in the literature dealing with this generalized $p$-setting. We therefore make an effort to give precise definitions and proof references. 

With the correct function spaces in place, we prove that the nonlinear transmission problem is equivalent to the weak form of~\cref{eq:class} in~\cref{thm:TPequivWE}, and introduce the new nonlinear Steklov--Poincar\'e operators, as maps from $\Lambda_i$ to $\Lambda_i^*$, in~\cref{sec:interface}. The latter yields that the transmission problem can be stated as a problem on $\Gamma$ and the Robin--Robin method reduces to the Peaceman--Rachford splitting. The main challenge is then to derive the fundamental properties of the nonlinear Steklov--Poincar\'e operators from the $p$-structure, which is achieved in~\cref{sec:SP}. By interpreting the nonlinear Steklov--Poincar\'e operators as unbounded, monotone maps on $L^2(\Gamma)$ we finally prove that the Robin--Robin method is well defined on~$W^{1,p}(\Omega_1)\times W^{1,p}(\Omega_2)$; see~\cref{cor:PRisRobin}, and convergent in the same space; see \cref{thm:conv}. The latter relies on the abstract theory of the Peaceman--Rachford splittings~\cite{lionsmercier}.

The continuous convergence analysis presented here also holds in the finite dimensional case obtained after a suitable spatial discretization, e.g., by employing finite elements. However, we will limit ourselves to the continuous case in this paper. Hence, important issues including convergence rates for the finite-dimensional case and the influence of the mesh width on the optimal choice of the method parameter $s$ will be explored elsewhere. 

Finally, $c_i$ and $C_i$ will denote generic positive constants that assume different values at different occurrences.

\section{Nonlinear elliptic equations with {\boldmath$p$}-structure}\label{sec:pstruct}

Throughout the paper, we will consider the nonlinear elliptic equation~\cref{eq:class} with $f\in L^2(\Omega)$ and $\Omega$ being a bounded Lipschitz domain. The equation is assumed to have a $p$-structure of the form: 

\begin{assumption}\label{ass:pstruct}
The parameters $(p, r)$ and the functions $\alpha:\R^d\to\R^d, g:\R\to\R$
satisfy the properties below.
\begin{itemize}
\item Let $p\in [2,\infty)$ and $r\in(1,\infty)$. If $p<d$ then $r\leq dp/\bigl(2(d-p)\bigr)+1$.
\item The functions $\alpha$ and $g$ are continuous and satisfy the growth conditions
\begin{displaymath}
    	|\alpha(z)| \leq C_1 |z|^{p-1}\quad\text{and}\quad |g(x)| \leq C_2 |x|^{r-1},\quad\text{for all }z\in \R^d, x\in\R.
\end{displaymath}
\item The function $\alpha$ is strictly monotone with the bound
\begin{displaymath}
    	\bigl(\alpha(z)-\alpha(\tilde{z})\bigr)\cdot(z-\tilde{z})\geq c_1 |z-\tilde{z}|^p,\quad\text{for all }z,\tilde{z}\in \R^d.
\end{displaymath}
\item The function $\alpha$ is coercive with the bound
\begin{displaymath}
    	\alpha(z)\cdot z\geq c_2 |z|^p,\quad\text{for all }z\in \R^d.
\end{displaymath}
\item The function $g$ is strictly monotone and coercive with the bounds
\begin{displaymath}
    	\bigl(g(x)-g(\tilde{x})\bigr)(x-\tilde{x})\geq c_3 |x-\tilde{x}|^r\quad\text{and}\quad
	g(x)x\geq c_4 |x|^r,
	\quad\text{for all }x,\tilde{x}\in \R.
\end{displaymath}
\end{itemize}
\end{assumption}

\begin{example}\label{ex:pLaplace}
The equation~\cref{eq:resolvent} satisfies~\cref{ass:pstruct} with $\alpha(z)=|z|^{p-2}z$, $\lambda>0$, $g(x)=\lambda x$ and r=2. The same holds for equation \cref{eq:eig} with $g(x)=\lambda |x|^{p-2}x$ and $r=p$. 
\end{example}

\begin{remark}
The last assertion of \cref{ass:pstruct} is made in order to ensure that the convergence analysis of the domain decomposition is valid without employing the Poincar\'e inequality, which allows decompositions where $\partial\Omega\setminus\partial\Omega_i=\emptyset$; see~\cref{fig:sub2}. If the latter setting is excluded, then the analysis is valid for a broader class of functions $g$, especially $g=0$. 
\end{remark}

\begin{remark}
Possible generalizations of \cref{ass:pstruct}, which we omit for sake of notational simplicity, include the cases: dependence on the spatial variable and first-order terms, e.g., $\alpha(\nabla u)=\alpha(x,u,\nabla u)$; the parameter choice $p\in (2d/(d+1), 2)$, which requires an additional set of embedding results for trace spaces; other variations similar to the $p$-Laplacian, e.g.,
\begin{displaymath}
\alpha(z)=(|z_1|^{p-2}z_1,\ldots,|z_d|^{p-2}z_d),
\end{displaymath}
which only require slight reformulations of the monotonicity and coercivity conditions. 
\end{remark}

Let $V=W^{1, p}_0(\Omega)$ and define the form $a: V\times V\rightarrow \R$ by
\begin{displaymath}
    	a(u, v)=\int_\Omega \alpha(\nabla u)\cdot\nabla v+g(u)v\,\mathrm{d}x.
\end{displaymath}
 The weak form of \cref{eq:class} is to find $u\in V$ such that
 \begin{equation}\label{eq:weak}
    	a(u, v)=(f, v)_{L^2(\Omega)}, \quad\text{for all } v\in V.
 \end{equation}
The $p$-structure implies that there exist a unique weak solution of \cref{eq:weak}; see, e.g.,~\cite[Theorem 2.36]{roubicek}. A central part of the existence proof, and our convergence analysis as well, is to observe that the $p$-structure directly implies that the form $a$ is bounded, strictly monotone and coercive.
\begin{lemma}\label{lemma:aprop}
If \cref{ass:pstruct} holds, then $a: V\times V\rightarrow \R$ is well defined and satisfies the upper bound
\begin{displaymath}
	|a(u, v)| \leq  C_1 \bigl(\|\nabla u\|^{p-1}_{L^p(\Omega)^d}\|\nabla v\|^{\phantom{p-1}}_{L^p(\Omega)^d}+\|u\|_{L^r(\Omega)}^{r-1}\|v\|^{\phantom{r-1}}_{L^r(\Omega)}\bigr),
\end{displaymath}
the strict monotonicity bound
\begin{displaymath}
	a(u, u-v)-a(v, u-v)\geq c_1\bigl(\|\nabla (u-v)\|^p_{L^p(\Omega)^d}+\|u-v\|^r_{L^r(\Omega)}\bigr)
\end{displaymath}
and the coercivity bound 
\begin{displaymath}
 a(u, u)\geq c_2\bigl(\|\nabla u\|^p_{L^p(\Omega)^d}+\|u\|^r_{L^r(\Omega)}\bigr),
\end{displaymath}
for all $u,v\in V$. 
\end{lemma}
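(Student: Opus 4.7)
My plan is to establish the three bounds in the order stated by combining the pointwise estimates in \cref{ass:pstruct} with H\"older's inequality and a Sobolev embedding. First I would verify that $a(u,v)$ is a well-defined real number for every $u,v\in V=W^{1,p}_0(\Omega)$. The integrand $\alpha(\nabla u)\cdot\nabla v$ is controlled by $C_1|\nabla u|^{p-1}|\nabla v|$ and is therefore integrable by H\"older's inequality with the conjugate exponents $p/(p-1)$ and $p$. Similarly, $|g(u)v|\leq C_2|u|^{r-1}|v|$ is integrable with conjugate exponents $r/(r-1)$ and $r$, provided $u,v\in L^r(\Omega)$. To secure the latter I would invoke the Sobolev embedding $W^{1,p}(\Omega)\hookrightarrow L^r(\Omega)$ on the bounded Lipschitz domain $\Omega$: for $p\geq d$ this is immediate, and for $p<d$ the bound $r\leq dp/\bigl(2(d-p)\bigr)+1$ in \cref{ass:pstruct} lies below the critical exponent $p^{\ast}=dp/(d-p)$, since $p\geq 2\geq 2d/(d+2)$.

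For the upper bound I would apply the same H\"older estimates directly. Starting from
\begin{displaymath}
|a(u,v)|\leq C_1\int_\Omega|\nabla u|^{p-1}|\nabla v|\,\mathrm{d}x+C_2\int_\Omega|u|^{r-1}|v|\,\mathrm{d}x,
\end{displaymath}
H\"older's inequality in each term produces precisely the claimed inequality, after renaming the generic constant $C_1$ so that it absorbs $C_2$.

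The monotonicity and coercivity bounds are then entirely pointwise. The strict monotonicity of $\alpha$ yields $\bigl(\alpha(\nabla u)-\alpha(\nabla v)\bigr)\cdot\nabla(u-v)\geq c_1|\nabla(u-v)|^p$ almost everywhere, and the strict monotonicity of $g$ yields $\bigl(g(u)-g(v)\bigr)(u-v)\geq c_3|u-v|^r$ almost everywhere; integrating over $\Omega$ and absorbing $\min(c_1,c_3)$ into the generic $c_1$ of the lemma gives the second bound. Coercivity follows identically from the pointwise bounds $\alpha(z)\cdot z\geq c_2|z|^p$ and $g(x)x\geq c_4|x|^r$, after integration and a corresponding renaming of constants.

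The only nontrivial input is the Sobolev embedding needed to render the $g(u)v$ integral finite for arbitrary $u,v\in V$; once this is in place all three statements reduce to H\"older's inequality and elementary pointwise arithmetic, and no additional ideas are required.
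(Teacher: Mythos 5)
Your proof is correct and is exactly the direct argument the paper has in mind: the paper states the lemma without proof, asserting that the bounds follow ``directly'' from the $p$-structure, and your combination of H\"older's inequality, the pointwise monotonicity/coercivity estimates, and the Sobolev embedding $W^{1,p}(\Omega)\hookrightarrow L^r(\Omega)$ (with the correct verification that $r\leq dp/\bigl(2(d-p)\bigr)+1$ is subcritical because $p\geq 2\geq 2d/(d+2)$) supplies precisely the missing details. No gaps.
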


In order to conduct the convergence analysis, we also make the following additional regularity assumption on the weak solution.
\begin{assumption}\label{ass:regularity}
The weak solution $u\in V$ of~\cref{eq:weak} satisfies $\alpha(\nabla u)\in C(\overline{\Omega})^{d}$. 
\end{assumption}
Note that the above regularity assumption does \emph{not} imply that $u$ is a strong solution in $W^{2,p}(\Omega)$. A possible generalization of \Cref{ass:regularity} is discussed in \cref{rem:reg}.
\begin{example}\label{ex:reg}
Consider the equations given by the $p$-Laplacian in \cref{ex:pLaplace}. If $p\geq d$ then the weak solution $u\in V$ is also in $C(\overline{\Omega})$. If in addition $f\in L^\infty(\Omega)$ and the boundary $\partial\Omega$ is $C^{1,\beta}$, then~\cite[Theorem 1]{lieberman} yields that $u\in C^{1,\beta}(\overline{\Omega})$. The latter implies that \cref{ass:regularity} is valid in this context. 
\end{example}

Finally, we will make frequent use of the fact that, under \cref{ass:pstruct}, the standard $W^{1, p}(\Omega)$-norm is equivalent to the norm
\begin{equation}\label{eq:embedding}
u\mapsto\|\nabla u\|_{L^p(\Omega)^d}+\|u\|_{L^r(\Omega)}.
\end{equation}
For $r\geq p$ this follows directly by the Sobolev embedding theorem together with the assumed restrictions on $(p,r)$. For $r<p$ the equivalence follows by an additional bootstrap argument. 

\section{Function spaces and trace operators on {\boldmath$\Omega_i$}}\label{sec:functionspaces}

We start by considering a manifold $\mathcal{M}$ in $\R^d$, which will play the role of $\partial\Omega_i$ or $\Gamma$. The manifold $\mathcal{M}$ is said to be Lipschitz if there exist open, overlapping sets $\Theta_r$ such that
\begin{displaymath}
	        \mathcal{M}=\bigcup_{r=1}^m\Theta_r,
\end{displaymath}
where each $\Theta_r$ can be described as the graph of a Lipschitz continuous function $b_r$. More precisely, there exists 
$(d-1)$-dimensional cubes $\theta_r$ and local charts $\psi_r:\Theta_r\rightarrow\theta_r$ that are bijective and Lipschitz continuous. The charts  have the structure $\psi_r^{-1}=A^{-1}_r\circ Q_r$, where $A_r:\R^d\rightarrow\R^d$ is a coordinate transformation and 
\begin{displaymath}
Q_r:\theta_r\rightarrow \R^d:x_r\mapsto \bigl(x_r,b_r(x_r)\bigr)
\end{displaymath}
for the Lipschitz continuous map $b_r:\theta_r\rightarrow\R$. A function $\mu:\mathcal{M}\rightarrow\R$ now has the local components $\mu\circ \psi^{-1}_r$. We refer to~\cite[Section 6.2]{kufner} for further details.

On a Lipschitz manifold we may introduce a measure~\cite[Chapter 3]{Naumann2011Measure} and thus define the integral and the space $L^p(\mathcal{M})$; see, e.g.,~\cite{cohn}. From~\cite[Chapters 3.4--3.5]{cohn} it follows that $L^p(\mathcal{M})$ is a Banach space and that $L^2(\mathcal{M})$ is a Hilbert space with the inner product
\begin{displaymath}
            (\eta, \mu)_{L^2(\mathcal{M})}=\int_{\mathcal{M}}\eta\mu \,\mathrm{d}S.
\end{displaymath}
Let $\{\varphi_r\}$ be a partition of unity of $\mathcal{M}$. The integral then satisfies
\begin{equation}\label{eq:surfint}
            \int_{\mathcal{M}}\mu\,\mathrm{d}S=\sum_{r=1}^m \int_{\theta_r}(\mu\varphi_r)\circ \psi^{-1}_r|n_r|\,\mathrm{d}x,
\end{equation}
where $n_r=(\partial_1 b_r, \partial_2 b_r, \dots, \partial_{d-1}b_r, -1)$; see~\cite[Theorem 3.9]{Naumann2011Measure}. Seemingly obvious properties of the integral, including 
\begin{displaymath}
 \int_{\mathcal{M}}\mu\, \mathrm{d}S=\int_{\mathcal{M}_0}\mu\,\mathrm{d}S+\int_{\mathcal{M}\setminus\mathcal{M}_0}\mu\,\mathrm{d}S,
\end{displaymath}
relies heavily on the observation that the integral is independent of the representation $(\Theta_r,A_r,b_r)$ and the choice of partition of unity $\{\varphi_r\}$; see~\cite[Theorems 3.5 and 3.7]{Naumann2011Measure} and the comments thereafter.

The equality~\cref{eq:surfint} also shows that our integral and $L^p$-spaces are equivalent to the ones used in~\cite{kufner}. Moreover, by~\cite[Lemma 6.3.5]{kufner}, the $L^p$-norm used here is equivalent to the norm
\begin{displaymath}
	       \mu\mapsto\Big(\sum_{r=1}^m\|\mu\circ \psi^{-1}_r\|_{L^p(\theta_r)}^p\Big)^{1/p}.
\end{displaymath}
Finally, recall that for a Lipschitz manifold $\mathcal{M}$  the unit outward normal vector $\nu=(\nu^1,\ldots,\nu^d)$ is defined almost everywhere; see \cite[Section 6.10.1]{kufner}. The normal vector is given locally by $\nu\circ\psi^{-1}_r=n_r/|n_r|$ and the Lipschitz continuity of $b_r$ yields that $\nu^\ell\in L^\infty(\mathcal{M})$.
 
\begin{assumption}\label{ass:Gamma}
The boundaries $\partial\Omega_{i}$ and the interface $\Gamma$ are all $(d-1)$-dimensional Lipschitz manifolds. 
\end{assumption}

We use the notation $(\Theta_r^i, \theta_r^i,m_i, \psi_r^i, b_r^i, \phi_r^i,\nu_i)$ for the quantities related to the local representations of $\partial\Omega_i$. For later use, we note that
\begin{displaymath}
 \nu_1=-\nu_2\quad\text{ on }\Gamma.
\end{displaymath}
Next, we define the fractional Sobolev spaces on the $(d-1)$-dimensional cubes $\theta_r$. Let $0< s<1$, then $W^{s, p}(\theta_r)$ is defined as all $u\in L^p(\theta_r)$ such that
\begin{displaymath}
	        |u|_{s,\theta_r}=\Big(\int_{\theta_r}\int_{\theta_r}\frac{|u(x)-u(y)|^p}{|x-y|^{d-1+s p}}\,\mathrm{d}x\,\mathrm{d}y\Big)^{1/p}<\infty.
\end{displaymath}
The corresponding norm is given by
\begin{displaymath}
	        \|u\|_{W^{s, p}(\theta_r)}=\|u\|_{L^p(\theta_r)}+|u|_{s,\theta_r}.
\end{displaymath}
Having defined the fractional Sobolev space on $\theta^i_r$ we can also define them on $\partial\Omega_i$. For $0< s<1$, introduce
\begin{displaymath}
	        W^{s, p}(\partial\Omega_i)=\{\mu\in L^p(\partial\Omega_i): \mu\circ (\psi_r^i)^{-1}\in W^{s, p}(\theta_r^i),\text{ for } r=1,\dots,m_i\},
\end{displaymath}
equipped with the norm
\begin{displaymath}
	   \|\mu\|_{W^{s, p}(\partial\Omega_i)}=\Big(\sum_{r=1}^{m_i}\|\mu\circ (\psi_r^i)^{-1}\|^p_{W^{s, p}(\theta_r^i)}\Big)^{1/p}.
 \end{displaymath}
By the definitions of the norms, it follows directly that
\begin{displaymath}
	    \|\mu\|_{L^p(\partial\Omega_i)}\leq C\|\mu\|_{W^{s, p}(\partial\Omega_i)}.
\end{displaymath}
Furthermore, the space $W^{s, p}(\partial\Omega_i)$ is complete and reflexive; see~\cite[Definition 6.8.6]{kufner} and the comment thereafter. 
Next, we recapitulate the trace theorem for $W^{1,p}$-functions on Lipschitz domains; see, e.g.,~\cite[Theorems 6.8.13 and 6.9.2]{kufner}.
\begin{lemma}\label{lemma:trace}
If the \cref{ass:pstruct,ass:Gamma} are valid, then there exists a surjective bounded linear operator $T_{\partial\Omega_i}:W^{1 ,p}(\Omega_i)\rightarrow W^{1-1/p, p}(\partial\Omega_i)$ such that $T_{\partial\Omega_i}u=u|_{\partial\Omega_i}$ when $u\in C^\infty(\overline{\Omega}_i)$. The operator $T_{\partial\Omega_i}$ has a bounded linear right inverse $R_{\partial\Omega_i}:W^{1-1/p, p}(\partial\Omega_i)\rightarrow W^{1 ,p}(\Omega_i)$.
\end{lemma}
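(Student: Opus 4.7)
The plan is to reduce the claim to a flat half-space model via the atlas $\{(\Theta_r^i,\psi_r^i)\}$ on $\partial\Omega_i$ together with a subordinate partition of unity, verify a trace estimate in the model, and then patch. Since $\Omega_i$ is Lipschitz, $C^\infty(\overline{\Omega}_i)$ is dense in $W^{1,p}(\Omega_i)$, so it suffices to define $T_{\partial\Omega_i}u=u|_{\partial\Omega_i}$ on this dense subspace and establish
$$\|u|_{\partial\Omega_i}\|_{W^{1-1/p,p}(\partial\Omega_i)}\leq C\|u\|_{W^{1,p}(\Omega_i)},$$
and then extend $T_{\partial\Omega_i}$ by continuity. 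To handle each local piece $\phi_r^i u$, I would apply the affine change of coordinates $A_r^i$ sending $\Theta_r^i$ onto the graph of $b_r^i$ and then the vertical translation flattening this graph to $\{t=0\}$. The composite is bi-Lipschitz from a neighborhood of $\Theta_r^i$ inside $\Omega_i$ to a half-cylinder $\theta_r^i\times(0,\delta)$, and the Lipschitz chain rule yields a function $v_r\in W^{1,p}(\theta_r^i\times(0,\delta))$ with norm controlled by $\|\phi_r^i u\|_{W^{1,p}(\Omega_i)}$.

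For the flat model, one proves that a smooth $v$ on $\theta_r^i\times(0,\delta)$ with trace $v_0=v(\cdot,0)$ satisfies $\|v_0\|_{L^p(\theta_r^i)}\leq C\|v\|_{W^{1,p}}$ via the fundamental theorem of calculus against a cutoff in the $t$ direction, and
$$\int_{\theta_r^i}\!\!\int_{\theta_r^i}\frac{|v_0(x)-v_0(y)|^p}{|x-y|^{d+p-2}}\,\mathrm{d}x\,\mathrm{d}y\leq C\|\nabla v\|_{L^p(\theta_r^i\times(0,\delta))}^p$$
via Gagliardo's original argument: write $v_0(x)-v_0(y)$ as a line integral of $\nabla v$ over a triangular path that dips into the half-cylinder to height $|x-y|$, raise to the $p$-th power, and apply Hardy together with Fubini. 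Summing the resulting local bounds over $r$ and invoking the norm equivalence stated after the definition of $W^{s,p}(\partial\Omega_i)$ delivers the global trace bound.

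For $R_{\partial\Omega_i}$, I would patch local extensions. Given $\mu\in W^{1-1/p,p}(\partial\Omega_i)$, decompose $\mu=\sum_r \phi_r^i\mu$, transport each $\mu_r=(\phi_r^i\mu)\circ(\psi_r^i)^{-1}$ to the flat cube $\theta_r^i$, and apply an explicit bounded extension such as the Gagliardo formula $E\mu_r(x,t)=\int \mu_r(x-ty)\,\eta(y)\,\mathrm{d}y$ with $\eta\in C_c^\infty(\R^{d-1})$ of integral one; the classical estimate shows $E\mu_r\in W^{1,p}(\theta_r^i\times(0,\delta))$ with norm controlled by $\|\mu_r\|_{W^{1-1/p,p}(\theta_r^i)}$. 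Transporting back through the inverse of the straightening, extending by zero, and summing yields $R_{\partial\Omega_i}\mu\in W^{1,p}(\Omega_i)$, and $T_{\partial\Omega_i}R_{\partial\Omega_i}\mu=\mu$ follows from $E\mu_r(\cdot,0)=\mu_r$ together with $\sum_r \phi_r^i=1$ on $\partial\Omega_i$. Boundedness follows from the chain of estimates above. Surjectivity of $T_{\partial\Omega_i}$ is then immediate from the existence of this right inverse.

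The main obstacle is the bi-Lipschitz change of coordinates: the Gagliardo seminorm is nonlocal and is only preserved up to multiplicative constants depending on the Lipschitz constants of $\psi_r^i$ and $A_r^i$, so the transformation law requires either a direct substitution-and-splitting estimate in the double integral or passage through a Besov characterization that is stable under bi-Lipschitz maps. A secondary technicality is coordinating the cutoffs so that the locally defined extensions combine into a single $W^{1,p}(\Omega_i)$ function whose trace is exactly $\mu$ on all of $\partial\Omega_i$, in particular on the interface $\Gamma$ where several charts may overlap.
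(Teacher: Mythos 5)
The paper does not prove this lemma at all: it is explicitly stated as a recapitulation of a known result and is disposed of by the citation to Kufner et al.\ (Theorems 6.8.13 and 6.9.2). Your outline is the standard Gagliardo--Ne\v{c}as proof that presumably underlies that citation, and as a sketch it is correct: density of $C^\infty(\overline{\Omega}_i)$ for Lipschitz domains, localization by a partition of unity subordinate to the atlas, bi-Lipschitz flattening of each graph piece, the half-space trace inequality with the correct Gagliardo exponent $d-1+sp = d+p-2$ for $s=1-1/p$, and the convolution extension $E\mu(x,t)=\int\mu(x-ty)\eta(y)\,\mathrm{d}y$ for the right inverse. So the comparison here is between a citation and a proof; you are supplying what the paper outsources.

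One small observation on your ``main obstacle.'' The bi-Lipschitz invariance of the $W^{s,p}$ double-integral seminorm is indeed the classical sore point, but in this paper it is partially built out of the problem: the norm on $W^{s,p}(\partial\Omega_i)$ is \emph{defined} through the fixed atlas $\{(\Theta_r^i,\psi_r^i)\}$, with the $r$-th contribution being $\|\mu\circ(\psi_r^i)^{-1}\|_{W^{s,p}(\theta_r^i)}$. Since your interior flattening and the boundary chart are by construction the same transformation $A_r^i$ followed by the graph-straightening, the trace of the flattened function on $\theta_r^i\times\{0\}$ \emph{is} $(u|_{\partial\Omega_i})\circ(\psi_r^i)^{-1}$, so you can apply the flat trace estimate directly in chart coordinates with no seminorm transport needed; chart-independence of the norm is a separate question the lemma does not ask about. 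Where the invariance genuinely enters is in bounding the $W^{1,p}$ norm of the flattened function and, in the extension step, in controlling the patched sum $\sum_r$ in the seminorm, which requires a product/commutator estimate for $\phi_r^i$ in $W^{s,p}$; both are routine but worth keeping distinct from the concern you raised.
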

We can then define the Sobolev spaces on $\Omega_i$ required for the domain decomposition, namely 
\begin{displaymath}
V_i^0=W^{1, p}_0(\Omega_i)
\quad\text{and}\quad
V_i=\{v\in W^{1, p}(\Omega_i): (T_{\partial\Omega_i} v)|_{\partial\Omega_i\setminus\Gamma}=0\}.
\end{displaymath}
The spaces are equipped with the norm
\begin{displaymath}
    \|v\|_{V_i}=\|\nabla v\|_{L^p(\Omega_i)^d}+\|v\|_{L^r(\Omega_i)}.
\end{displaymath}
As for~\cref{eq:embedding}, this norm is equivalent to the standard $W^{1, p}(\Omega_i)$-norm under~\cref{ass:pstruct}. Furthermore, the spaces $V_i^0$ and $V_i$ are reflexive Banach spaces.

\section{Function spaces and trace operators on {\boldmath$\Gamma$}}\label{sec:LM}

The $L^p$-form of the Lions--Magenes spaces can be defined as
\begin{displaymath}
        \Lambda_i=\{\mu\in L^p(\Gamma): E_i\mu\in W^{1-1/p, p}(\partial\Omega_i)\}, 
        \quad\text{with}\quad
         \|\mu\|_{\Lambda_i}=\|E_i\mu\|_{W^{1-1/p, p}(\partial\Omega_i)}.
 \end{displaymath}
Here, $E_i\mu$ denotes the extension by zero of $\mu$ to $\partial\Omega_i$. We also define the trace space 
\begin{displaymath}
	\Lambda=\{\mu\in L^p(\Gamma): \mu\in\Lambda_i,\text{ for } i=1,2\},
	\quad\text{with}\quad
	\|\mu\|_{\Lambda}=\|\mu\|_{\Lambda_1}+\|\mu\|_{\Lambda_2}.
\end{displaymath}

\begin{lemma}\label{lem:LionsMagenes}
If the \cref{ass:pstruct,ass:Gamma} hold, then $\Lambda_i$ and $\Lambda$ are reflexive Banach spaces. 
\end{lemma}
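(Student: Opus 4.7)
The plan is to realize $\Lambda_i$ isometrically as a closed subspace of the known reflexive space $W^{1-1/p,p}(\partial\Omega_i)$, and then to realize $\Lambda$ as a closed subspace of the product $\Lambda_1 \times \Lambda_2$. Reflexivity and completeness will follow in both cases because closed subspaces of reflexive Banach spaces are reflexive Banach spaces, and finite products of reflexive Banach spaces are reflexive.

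For $\Lambda_i$, I would first observe that the extension-by-zero map $E_i : \Lambda_i \to W^{1-1/p,p}(\partial\Omega_i)$ is linear, injective, and, by the very definition of $\|\cdot\|_{\Lambda_i}$, an isometry onto its image
\begin{displaymath}
\mathcal{R}_i = \{v \in W^{1-1/p,p}(\partial\Omega_i) : v = 0 \text{ a.e.\ on } \partial\Omega_i \setminus \Gamma\}.
\end{displaymath}
The crux is to show that $\mathcal{R}_i$ is closed in $W^{1-1/p,p}(\partial\Omega_i)$: if $v_n \in \mathcal{R}_i$ converges to $v$ in $W^{1-1/p,p}(\partial\Omega_i)$, then the embedding $\|\cdot\|_{L^p(\partial\Omega_i)} \leq C\|\cdot\|_{W^{1-1/p,p}(\partial\Omega_i)}$ noted in Section~3 gives $v_n \to v$ in $L^p(\partial\Omega_i)$, and passing to an a.e.\ convergent subsequence forces $v = 0$ a.e.\ on $\partial\Omega_i\setminus\Gamma$. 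Since $W^{1-1/p,p}(\partial\Omega_i)$ is complete and reflexive (as recalled above \cref{lemma:trace}), so is $\mathcal{R}_i$, and hence so is $\Lambda_i$ via the isometric isomorphism $E_i$.

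For $\Lambda$, I would consider the diagonal embedding
\begin{displaymath}
\Delta : \Lambda \to \Lambda_1 \times \Lambda_2, \quad \Delta\mu = (\mu,\mu),
\end{displaymath}
where the product is equipped with the norm $\|(\mu_1,\mu_2)\| = \|\mu_1\|_{\Lambda_1} + \|\mu_2\|_{\Lambda_2}$. This is an isometry by the definition of $\|\cdot\|_\Lambda$. Its image is
\begin{displaymath}
\{(\mu_1,\mu_2) \in \Lambda_1\times\Lambda_2 : \mu_1 = \mu_2 \text{ in } L^p(\Gamma)\},
\end{displaymath}
and to see it is closed one uses that each $\Lambda_i$ embeds continuously into $L^p(\Gamma)$, again via the $L^p$ bound on the $W^{1-1/p,p}(\partial\Omega_i)$-norm: if $(\mu_{1,n},\mu_{2,n}) \to (\mu_1,\mu_2)$ in the product with $\mu_{1,n} = \mu_{2,n}$ for every $n$, then both components converge to their respective limits in $L^p(\Gamma)$, and the common $L^p$-limit forces $\mu_1 = \mu_2$. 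Since $\Lambda_1 \times \Lambda_2$ is reflexive (finite product of reflexive spaces) and complete, $\Lambda$ inherits both properties.

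The argument is essentially bookkeeping, with no deep obstacle; the only subtle point is the use of the continuous embedding of $W^{1-1/p,p}(\partial\Omega_i)$ into $L^p(\partial\Omega_i)$ to upgrade $W^{1-1/p,p}$-convergence to $L^p$-convergence so that the pointwise vanishing conditions (vanishing on $\partial\Omega_i\setminus\Gamma$ in the first part, equality of the two components in the second) are preserved under limits. Once this observation is in hand, both closedness claims and thus both reflexivity claims follow immediately.
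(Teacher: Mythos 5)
Your proof is correct and follows the same overall strategy as the paper: realize each space isometrically as a closed subspace of a known reflexive Banach space, then invoke the fact that closed subspaces of reflexive Banach spaces are themselves complete and reflexive. For $\Lambda_i$ the two arguments coincide (the paper verifies closedness by a direct integral estimate rather than passing to an a.e.\ convergent subsequence, but this is cosmetic). For $\Lambda$ there is a small divergence: the paper embeds $\Lambda$ via $E\mu=(E_1\mu,E_2\mu)$ into $W^{1-1/p,p}(\partial\Omega_1)\times W^{1-1/p,p}(\partial\Omega_2)$ and must then re-verify both the vanishing condition on $\partial\Omega_i\setminus\Gamma$ and the matching condition on $\Gamma$, whereas you embed $\Lambda$ diagonally into $\Lambda_1\times\Lambda_2$, which has just been established as reflexive, so only the matching condition $\mu_1=\mu_2$ remains to be checked in the limit. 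Your variant is a modest streamlining that reuses the first half of the lemma rather than a genuinely different argument; both rest on the same key embedding $W^{1-1/p,p}(\partial\Omega_i)\hookrightarrow L^p(\partial\Omega_i)$ to transfer the pointwise constraints to the limit.
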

\begin{proof}
Observe that $E_i$ is a linear isometry from $\Lambda_i$ onto 
\begin{equation}\label{eq:ranEi}
        \ran(E_i)=\{\mu\in {W^{1-1/p, p}(\partial\Omega_i)}: \mu|_{\partial\Omega\setminus\Gamma}=0\}.
\end{equation}
Next, consider a sequence $\{\mu^k\}\subset  \ran(E_i)$ such that $\mu^k\to \mu$ in $W^{1-1/p, p}(\partial\Omega_i)$. Then $\mu^k|_{\Omega_i\setminus\Gamma}=0$ and $\mu^k\to \mu$ in $L^p(\partial\Omega_i)$, which implies that 
\begin{equation}\label{eq:Ei}
	\int_{\partial\Omega_i\setminus\Gamma}|\mu|^p\,\mathrm{d}S
		=\int_{\partial\Omega_i\setminus\Gamma}|\mu^k-\mu|^p\,\mathrm{d}S
		\leq\int_{\partial\Omega_i}|\mu^k-\mu|^p\,\mathrm{d}S\to 0,\quad\text{as }k\to\infty.
\end{equation}
Hence, $\mu\in\ran(E_i)$ and consequently $\ran(E_i)$ is a closed subset of $W^{1-1/p, p}(\partial\Omega_i)$. The space $\Lambda_i$ is therefore isomorphic to a closed subset of the reflexive Banach space $W^{1-1/p, p}(\partial\Omega_i)$, i.e., $\Lambda_i$ is complete and reflexive~\cite[Chapter 8, Theorem 15]{lax}.

To prove that the same holds true for $\Lambda$ introduce the reflexive Banach space $X=W^{1-1/p, p}(\partial\Omega_1)\times W^{1-1/p, p}(\partial\Omega_2)$, with the norm
\begin{displaymath}
        \|(\mu_1, \mu_2)\|_X=\|\mu_1\|_{W^{1-1/p, p}(\partial\Omega_1)}+\|\mu_2\|_{W^{1-1/p, p}(\partial\Omega_2)},
\end{displaymath}
and the operator $E:\Lambda\rightarrow X$ defined by $E\mu=(E_1\mu, E_2\mu)$. As $E$ is a linear isometry from $\Lambda$ onto
\begin{displaymath}
 \ran(E)=\{(\mu_1, \mu_2)\in X: {\mu_1}|_{\partial\Omega_1\setminus\Gamma}=0,\, {\mu_2}|_{\partial\Omega_2\setminus\Gamma}=0,\, {\mu_1}|_{\Gamma}={\mu_2}|_{\Gamma} \},
\end{displaymath}
it is again sufficient to prove that $\ran(E)$ is a closed subset of $X$. Let $\{(\mu_1^k,\mu_2^k)\}\subset\ran(E)$ be a convergent sequence in $X$ with the limit $(\mu_1,\mu_2)$. By the same argument as~\cref{eq:Ei}, we obtain that $\mu_i|_{\Omega_i\setminus\Gamma}=0$. As ${\mu^k_1}|_{\Gamma}={\mu^k_2}|_{\Gamma}$, we also have the limit
 \begin{displaymath}
        \int_\Gamma|\mu_1-\mu_2|^p\,\mathrm{d}S
        \leq 2^{p-1}\big(\int_\Gamma|\mu_1^k-\mu_1|^p\,\mathrm{d}S
         +\int_\Gamma|\mu_2^k-\mu_2|^p\,\mathrm{d}S\big)\to 0,\quad\text{as }k\to\infty,
\end{displaymath}
i.e., ${\mu_1}|_{\Gamma}={\mu_2}|_{\Gamma}$ in $L^p(\Gamma)$ and we obtain that $(\mu_1,\mu_2)\in\ran(E)$. Thus, $\ran(E)$ is closed and $\Lambda$ is therefore a reflexive Banach space.  
\end{proof}
\begin{lemma}
If the \cref{ass:pstruct,ass:Gamma} hold, then $\Lambda_i$ and $\Lambda$ are dense in $L^2(\Gamma)$.
\end{lemma}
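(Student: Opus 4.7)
The plan is to exhibit a common dense subset of $\Lambda$, $\Lambda_1$, $\Lambda_2$, and $L^2(\Gamma)$, consisting of Lipschitz continuous functions on $\Gamma$ whose support is a compact subset of the relative interior of $\Gamma$, viewed as an open subset of either $\partial\Omega_i$. Note first that $\Lambda \subset \Lambda_i \subset L^p(\Gamma) \hookrightarrow L^2(\Gamma)$, since $p\geq 2$ and $\Gamma$ has finite surface measure, so density in $L^2(\Gamma)$ is the natural statement to prove.

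The first step is to verify that any such $\mu$ lies in $\Lambda$. Writing $K:=\supp\mu$, compactness of $K$ together with closedness of $\partial\Omega_i\setminus\Gamma$ yields $\mathrm{dist}(K,\partial\Omega_i\setminus\Gamma)>0$, so the zero extension $E_i\mu$ is Lipschitz continuous on the whole manifold $\partial\Omega_i$. Pulling back through the charts $\psi_r^i$ reduces matters to showing that a Lipschitz function on a bounded cube $\theta\subset\R^{d-1}$ lies in $W^{1-1/p,p}(\theta)$. Bounding the Slobodeckij integrand using the Lipschitz constant $L$ and the identity $d-1+sp=d+p-2$ for $s=1-1/p$, the integrand is dominated by $L^p|x-y|^{p-(d+p-2)}=L^p|x-y|^{2-d}$, which is integrable on bounded subsets of $\R^{d-1}$ since $2-d>-(d-1)$. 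Hence $E_i\mu\in W^{1-1/p,p}(\partial\Omega_i)$ and $\mu\in\Lambda_i$; by symmetry in $i$ one concludes $\mu\in\Lambda$.

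The second step is to show that this class of functions is dense in $L^2(\Gamma)$. Since $\Gamma$ is an open subset of the locally compact Hausdorff space $\partial\Omega_i$ equipped with the Radon surface measure of \cref{eq:surfint}, $C_c(\Gamma)$ is dense in $L^2(\Gamma)$ by the standard Lusin/Radon argument. Any $f\in C_c(\Gamma)$ with support in a compact set $K\subset\Gamma$ can then be approximated uniformly, and hence in $L^2(\Gamma)$, by Lipschitz functions with support in a slightly enlarged compact $K'\subset\Gamma$. Concretely, one fixes a partition of unity subordinate to the charts $\{\Theta_r^i\}$, pulls each localized piece back to $\R^{d-1}$, and mollifies there with a parameter smaller than $\mathrm{dist}(K,\Gamma\setminus K')$ so that supports remain inside $K'$.

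The only part requiring genuine bookkeeping is the second step, where the mollification must simultaneously respect the chart structure of $\partial\Omega_i$ and preserve compactness of the support inside $\Gamma$. These are standard Lipschitz-manifold manipulations, whereas the first step is essentially an elementary integral estimate arising from the particular value $s=1-1/p$.
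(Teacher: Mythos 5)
Your proof is correct and follows the same strategy that the paper defers to (the citation of \cite[Theorem~6.6.3]{kufner}): exhibit a class of compactly supported, nice functions on $\Gamma$ that lies in $\Lambda = \Lambda_1\cap\Lambda_2$ and is dense in $L^2(\Gamma)$, checking membership via the Slobodeckij seminorm in local charts and density via a partition-of-unity/mollification argument on the Lipschitz manifold. Using Lipschitz rather than smooth test functions is a clean choice, since the elementary estimate $|x-y|^{p-(d+p-2)}=|x-y|^{2-d}$ is locally integrable in $\R^{d-1}$ for every $d\ge 1$, giving membership in $W^{1-1/p,p}$ at once.
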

The proof of the lemma is almost identical to the proof of \cite[Theorem 6.6.3]{kufner} and is therefore left out. 
\begin{remark}
We conjecture that $\Lambda_1=\Lambda_2$. However, we will move on to a $L^2(\Gamma)$-framework for which it is not necessary to make this identification. 
\end{remark}
Collecting these results yield the Gelfand triplets 
\begin{displaymath}
	\Lambda_i \overset{d}{\hookrightarrow} L^2(\Gamma) \cong L^2(\Gamma)^* \overset{d}{\hookrightarrow}  \Lambda_i^*
	\quad \text{and}\quad 
	\Lambda \overset{d}{\hookrightarrow} L^2(\Gamma) \cong L^2(\Gamma)^* \overset{d}{\hookrightarrow}  \Lambda^*.
\end{displaymath}
For future use, we introduce the Riesz isomorphism on  $L^2(\Gamma)$ given by
\begin{displaymath}
	J: L^2(\Gamma)\to L^2(\Gamma)^*: \mu\mapsto (\mu,\cdot)_{ L^2(\Gamma)},
\end{displaymath}
which satisfies the relations
\begin{displaymath}
	\langle J\eta,\mu_i\rangle_{\Lambda_i^*\times \Lambda_i}= (\eta,\mu_i)_{L^2(\Gamma)}
	\quad\text{and}\quad
	\langle J\eta,\mu\rangle_{\Lambda^*\times \Lambda}= (\eta,\mu)_{ L^2(\Gamma)},
\end{displaymath}
for all $\eta\in L^2(\Gamma)$, $\mu_i\in \Lambda_i$ and $\mu\in \Lambda$. Here, $\langle\cdot,\cdot\rangle_{X^*\times X}$
denotes the dual pairing between a Banach space $X$ and its dual $X^*$. In the following we will drop the subscripts on the dual parings. 
  
In order to relate the spaces $V_i$ and $\Lambda_i$, we observe that for $v\in V_i$ one has $T_{\partial\Omega_i}v\in\ran(E_i)$; see~\cref{eq:ranEi}. Hence, the trace operator
\begin{displaymath}
T_i:V_i\to\Lambda_i:v\mapsto \bigl(T_{\partial\Omega_i}v\bigr)|_{\Gamma}
\end{displaymath}
is well defined. We also introduce the linear operator
\begin{displaymath}
R_i:\Lambda_i\to V_i:\mu\mapsto R_{\partial\Omega_i} E_i\mu.
\end{displaymath}

\begin{lemma}
If the \cref{ass:pstruct,ass:Gamma} hold, then $T_i$ and $R_i$ are bounded, and $R_i$ is a right inverse of $T_i$.
\end{lemma}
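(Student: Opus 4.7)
The plan is to unpack the definitions and reduce everything to properties of $T_{\partial\Omega_i}$ and $R_{\partial\Omega_i}$ supplied by \cref{lemma:trace}, together with the equivalence between the norm on $V_i$ and the standard $W^{1,p}(\Omega_i)$-norm noted at the end of \cref{sec:functionspaces}.

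The key observation driving the proof is that, for $v \in V_i$, the trace $T_{\partial\Omega_i}v$ already vanishes on $\partial\Omega_i\setminus\Gamma$ by the very definition of $V_i$. Consequently, $E_i T_i v = T_{\partial\Omega_i} v$, and this identity is the single fact that makes the argument go through. From it, I obtain
\begin{displaymath}
\|T_i v\|_{\Lambda_i} = \|E_i T_i v\|_{W^{1-1/p,p}(\partial\Omega_i)} = \|T_{\partial\Omega_i} v\|_{W^{1-1/p,p}(\partial\Omega_i)} \leq C\|v\|_{W^{1,p}(\Omega_i)} \leq C\|v\|_{V_i},
\end{displaymath}
establishing boundedness of $T_i$; in particular, $T_i$ does land in $\Lambda_i$ because $E_iT_iv = T_{\partial\Omega_i}v \in W^{1-1/p,p}(\partial\Omega_i)$.

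For $R_i$, I first verify well-definedness: given $\mu \in \Lambda_i$, the function $R_{\partial\Omega_i}E_i\mu$ lies in $W^{1,p}(\Omega_i)$, and applying $T_{\partial\Omega_i}$ recovers $E_i\mu$, which vanishes on $\partial\Omega_i\setminus\Gamma$ by construction; hence $R_i\mu \in V_i$. Boundedness then follows from
\begin{displaymath}
\|R_i\mu\|_{V_i} \leq C\|R_{\partial\Omega_i}E_i\mu\|_{W^{1,p}(\Omega_i)} \leq C\|E_i\mu\|_{W^{1-1/p,p}(\partial\Omega_i)} = C\|\mu\|_{\Lambda_i},
\end{displaymath}
where the two inequalities use, respectively, norm equivalence on $V_i$ and boundedness of $R_{\partial\Omega_i}$ from \cref{lemma:trace}. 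Finally, the right-inverse property is a direct computation:
\begin{displaymath}
T_i R_i \mu = (T_{\partial\Omega_i} R_{\partial\Omega_i} E_i \mu)\big|_\Gamma = (E_i \mu)\big|_\Gamma = \mu.
\end{displaymath}

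There is no serious obstacle here; the statement is essentially a bookkeeping exercise once one notices that the defining constraint of $V_i$ is exactly what makes extension-by-zero of the trace and restriction-to-$\Gamma$ of the trace inverse to one another. The only point that merits care is confirming that $R_i\mu$ actually satisfies the vanishing boundary condition defining $V_i$, which is why I address well-definedness of $R_i$ explicitly before bounding it.
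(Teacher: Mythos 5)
Your proof is correct and follows essentially the same route as the paper: identify that $E_i T_i v = T_{\partial\Omega_i} v$ for $v\in V_i$, reduce both bounds to \cref{lemma:trace} and the norm equivalence on $V_i$, and compute $T_i R_i \mu = \mu$ directly. The only addition is that you explicitly verify $R_i\mu\in V_i$, a well-definedness step the paper leaves implicit; this is a reasonable bit of extra care rather than a different method.
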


\begin{proof}
For $v\in V_i$ and $\mu\in\Lambda_i$ we have, by \cref{lemma:trace}, that 
\begin{gather*}
   	\|T_iv\|_{\Lambda_i} =\|E_i\bigl((T_{\partial\Omega_i}v)|_{\Gamma}\bigr)\|_{W^{1-1/p,p}(\partial\Omega_i)}
		                          =\|T_{\partial\Omega_i}v\|_{W^{1-1/p,p}(\partial\Omega_i)}\leq C_i\|v\|_{V_i}\\[5pt]
	 \text{and}\quad
	 \|R_i\mu\|_{V_i} =  \|R_{\partial\Omega_i} E_i\mu\|_{V_i}\leq C_i \|E_i\mu\|_{W^{1-1/p,p}(\partial\Omega_i)}=C_i\|\mu\|_{\Lambda_i}.
\end{gather*}
Hence, the linear operators $T_i$ and $R_i$ are bounded. Furthermore, for every $\mu\in\Lambda_i$ we have 
\begin{displaymath}
T_iR_i\mu=\bigl(T_{\partial\Omega_i}R_{\partial\Omega_i} E_i\mu\bigr)|_{\Gamma} = (E_i\mu)|_{\Gamma}=\mu,
\end{displaymath}
i.e., $R_i$ is a right inverse of $T_i$.
\end{proof}
We continue by deriving a few useful properties related to the operator $T_i$. 
\begin{lemma}\label{lemma:L1}
If the \cref{ass:pstruct,ass:Gamma} hold and $v\in V$, then $\mu = T_1v|_{\Omega_1}=T_2v|_{\Omega_2}$ is an element in $\Lambda$.
\end{lemma}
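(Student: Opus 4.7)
The plan is to decompose the statement into two claims: first, that the restriction $v|_{\Omega_i}$ belongs to $V_i$, so that $T_i(v|_{\Omega_i})$ is a well-defined element of $\Lambda_i$ whose zero-extension to $\partial\Omega_i$ is precisely $T_{\partial\Omega_i}(v|_{\Omega_i})$; second, that the two traces from either side of $\Gamma$ coincide, so that their common value $\mu$ lies simultaneously in $\Lambda_1$ and $\Lambda_2$, hence in $\Lambda$.

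For the first claim I would pick an approximating sequence $v^k\in C^\infty_c(\Omega)$ with $v^k\to v$ in $W^{1,p}(\Omega)$, which exists by the definition of $V=W^{1,p}_0(\Omega)$. Restriction to $\Omega_i$ is bounded from $W^{1,p}(\Omega)$ into $W^{1,p}(\Omega_i)$, so \cref{lemma:trace} yields $T_{\partial\Omega_i}(v^k|_{\Omega_i})\to T_{\partial\Omega_i}(v|_{\Omega_i})$ in $W^{1-1/p,p}(\partial\Omega_i)$ and thus in $L^p(\partial\Omega_i)$. Because $\supp v^k$ is a compact subset of $\Omega$, it is disjoint from $\partial\Omega$, and in particular disjoint from the (essentially $\partial\Omega$-contained) set $\partial\Omega_i\setminus\Gamma$. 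Consequently $T_{\partial\Omega_i}(v^k|_{\Omega_i})=0$ a.e.\ on $\partial\Omega_i\setminus\Gamma$, and passing to the $L^p$-limit gives $T_{\partial\Omega_i}(v|_{\Omega_i})|_{\partial\Omega_i\setminus\Gamma}=0$, so $v|_{\Omega_i}\in V_i$. In particular $E_i T_i(v|_{\Omega_i})=T_{\partial\Omega_i}(v|_{\Omega_i})\in W^{1-1/p,p}(\partial\Omega_i)$.

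For the second claim I would use that $C^\infty(\overline{\Omega})$ is dense in $W^{1,p}(\Omega)$ on Lipschitz domains to approximate $v$ by $w^k\in C^\infty(\overline{\Omega})$. For smooth functions $T_i(w^k|_{\Omega_i})=w^k|_\Gamma$ from either side, so the equality $T_1(w^k|_{\Omega_1})=T_2(w^k|_{\Omega_2})$ is trivial. The convergence $w^k|_{\Omega_i}\to v|_{\Omega_i}$ in $W^{1,p}(\Omega_i)$ together with boundedness of $T_i$ in $L^p(\Gamma)$ then transfers the identity to the limit, yielding $T_1(v|_{\Omega_1})=T_2(v|_{\Omega_2})$ in $L^p(\Gamma)$. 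Combined with the first claim this means that the common value $\mu$ satisfies $E_i\mu\in W^{1-1/p,p}(\partial\Omega_i)$ for $i=1,2$, i.e., $\mu\in\Lambda$.

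The main obstacle I anticipate is the set-theoretic bookkeeping for $\partial\Omega_i\setminus\Gamma$: by definition $\Gamma=(\partial\Omega_1\cap\partial\Omega_2)\setminus\partial\Omega$, so $\partial\Omega_i\setminus\Gamma$ coincides with $\partial\Omega_i\cap\partial\Omega$ up to the lower-dimensional set of cross points, which is negligible for the $L^p$-arguments used above. Once this identification is made, both steps reduce to standard density-plus-trace-continuity arguments.
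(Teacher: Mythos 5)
Your proposal is correct and uses the same core strategy as the paper: approximate $v$ by smooth compactly supported functions and pass to the limit using the continuity of the trace. The paper, however, obtains both of your claims from the single $C^\infty_0(\Omega)$ approximating sequence, since such functions have trace zero on $\partial\Omega_i\setminus\Gamma\subset\partial\Omega$ (giving $v|_{\Omega_i}\in V_i$, as $V_i$ is closed in $W^{1,p}(\Omega_i)$) and agree across $\Gamma$, so the limits $T_1v_1^k\to T_1v_1$ and $T_2v_2^k\to T_2v_2$ in $\Lambda_i\hookrightarrow L^p(\Gamma)$ must coincide. Your second density argument with $C^\infty(\overline{\Omega})$ is therefore an unnecessary detour; and as written it has a small notational flaw, since $w^k|_{\Omega_i}$ need not lie in $V_i$, so $T_i$ is not defined on it---you would have to phrase that step in terms of $(T_{\partial\Omega_i}\cdot)|_\Gamma$ on all of $W^{1,p}(\Omega_i)$, which is exactly what your first density argument already makes available.
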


\begin{proof}
Let $v\in V$. As $C^\infty_0(\Omega)$ is dense in $V$, there exists a sequence $\{v^k\}\subset C^\infty_0(\Omega)$ such that $v^k\to v$ in $V$. Set $v_i=v|_{\Omega_i}$ and  $v_i^k=v^k|_{\Omega_i}$. Clearly, $T_1v^k_1=T_2v^k_2$. Since $v^k\to v$ in $V$, we also have that $v_i^k\to v_i$ in $V_i$. The continuity of $T_i$ then implies that $T_iv_i^k\to T_iv_i$ in $\Lambda_i$. Putting this together gives us 
\begin{displaymath}
\Lambda_1\ni T_1v_1=\lim_{k\to\infty}T_1v_1^k= \lim_{k\to\infty}T_2v_2^k=T_2v_2\in\Lambda_2\quad\text{in }L^p(\Gamma).
\end{displaymath}
If we now define $\mu=T_1v_1=T_2v_2$, then $\mu$ is an element in $\Lambda=\Lambda_1\cap\Lambda_2$.
\end{proof}

\begin{lemma}\label{lemma:L2}
Let the \cref{ass:pstruct,ass:Gamma} hold. If two elements $v_{1}\in V_1$ and $v_{2}\in V_2$ satisfies $T_1v_1=T_2v_2$, then
$v=\{v_1 \textrm{ on } \Omega_1;\, v_2 \textrm{ on } \Omega_2\}$ is an element in $V$.
\end{lemma}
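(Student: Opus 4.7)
The plan is to show directly that $v\in W^{1,p}(\Omega)$ by computing its weak gradient piecewise, and then to verify that its trace on $\partial\Omega$ vanishes so that $v\in W^{1,p}_0(\Omega) = V$. The candidate for $\nabla v$ is the function $w$ defined by $w|_{\Omega_i}=\nabla v_i$; this belongs to $L^p(\Omega)^d$ because $\Omega\setminus(\Omega_1\cup\Omega_2)\subseteq\Gamma$ has zero $d$-dimensional Lebesgue measure. The central ingredient is that the transmission condition $T_1v_1=T_2v_2$ on $\Gamma$, together with $\nu_1=-\nu_2$, makes the interface contributions in Green's formula cancel.

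Concretely, for a test function $\phi\in C_0^\infty(\Omega)$ and $j=1,\dots,d$, I would apply the Green identity on each Lipschitz subdomain $\Omega_i$ separately, using $v_i\in W^{1,p}(\Omega_i)$ and $\phi|_{\Omega_i}\in W^{1,p'}(\Omega_i)$:
\begin{displaymath}
\int_{\Omega_i} v_i\,\partial_j\phi\,\mathrm{d}x = -\int_{\Omega_i}\partial_j v_i\,\phi\,\mathrm{d}x + \int_{\partial\Omega_i}(T_{\partial\Omega_i}v_i)(T_{\partial\Omega_i}\phi)\,\nu_i^j\,\mathrm{d}S.
\end{displaymath}
Since $\phi\in C_0^\infty(\Omega)$ vanishes on $\partial\Omega_i\cap\partial\Omega$, and $v_i\in V_i$ has zero trace on $\partial\Omega_i\setminus\Gamma$, the boundary integral reduces to $\int_\Gamma (T_iv_i)(\phi|_\Gamma)\nu_i^j\,\mathrm{d}S$. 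Summing over $i=1,2$ and invoking $T_1v_1=T_2v_2$ in $L^p(\Gamma)$ together with $\nu_1^j=-\nu_2^j$, the two interface integrals cancel, yielding $\int_\Omega v\,\partial_j\phi\,\mathrm{d}x=-\int_\Omega w_j\,\phi\,\mathrm{d}x$ for all $\phi\in C_0^\infty(\Omega)$. Hence $v\in W^{1,p}(\Omega)$ with $\nabla v=w$.

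To finish, I would verify that $T_{\partial\Omega}v=0$. The trace operator localizes consistently: on $\partial\Omega\cap\partial\Omega_i$, one has $T_{\partial\Omega}v = T_{\partial\Omega_i}v_i$ almost everywhere. This follows by approximating each $v_i$ by smooth functions (with values on $\partial\Omega_i\setminus\Gamma$ not necessarily zero, but with the whole sequence converging in $W^{1,p}(\Omega_i)$), gluing the approximants and applying the continuity of $T_{\partial\Omega}$ and $T_{\partial\Omega_i}$. Since $v_i\in V_i$ gives $T_{\partial\Omega_i}v_i=0$ on $\partial\Omega_i\setminus\Gamma\supseteq\partial\Omega\cap\partial\Omega_i$ (up to cross points, which form a set of $(d-1)$-measure zero), we conclude $T_{\partial\Omega}v=0$ and thus $v\in V$.

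The main obstacle is the bookkeeping around the interface cancellation and the restriction property of the trace: one must be careful that all the relevant boundary integrals are well defined (the integrands lie in $L^1$ because $T_{\partial\Omega_i}v_i\in L^p(\partial\Omega_i)$, $T_{\partial\Omega_i}\phi\in L^\infty(\partial\Omega_i)$, and $\nu_i^j\in L^\infty(\partial\Omega_i)$), and that the possible cross points where $\Gamma$ meets $\partial\Omega$ do not spoil the argument, which is ensured by their $(d-1)$-dimensional measure being zero under \cref{ass:Gamma}.
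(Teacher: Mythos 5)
Your proof takes essentially the same route as the paper: define the candidate weak gradient piecewise, apply Green's formula on each Lipschitz subdomain $\Omega_i$, use the transmission condition $T_1v_1=T_2v_2$ together with $\nu_1=-\nu_2$ to cancel the interface contributions, and conclude $v\in W^{1,p}(\Omega)$ with the expected gradient. The one place you spend extra effort --- verifying $T_{\partial\Omega}v=0$ --- is dispatched in the paper with ``by construction''; your conclusion is correct, but the cleanest justification is to approximate $v$ itself (now known to lie in $W^{1,p}(\Omega)$) by functions in $C^\infty(\overline{\Omega})$ and use that their restrictions converge to $v_i$ in $W^{1,p}(\Omega_i)$, rather than attempting to glue separate approximants of $v_1$ and $v_2$, which need not agree on $\Gamma$.
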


\begin{proof}
It is clear that $v\in L^p(\Omega)$. For each component $1\leq \ell\leq d$, there exists a weak derivative $\partial_\ell v_i\in L^p(\Omega_i)$ of $v_i\in V_i\subset W^{1,p}(\Omega_i)$. If we define
\begin{displaymath}
       z_\ell=\{\partial_\ell v_1 \textrm{ on } \Omega_1;\, \partial_\ell v_2 \textrm{ on } \Omega_2\},
\end{displaymath}
then $z_\ell\in L^p(\Omega)$. Let $w\in C^\infty_0(\Omega)$ and set $w_i=w|_{\Omega_i}\in C^\infty(\Omega_i)$. The $W^{1,p}(\Omega_i)$-version of Green's formula~\cite[Section 3.1.2]{necas} yields that
 \begin{align*}
        \int_{\Omega} z_\ell w\,\mathrm{d}x&=\sum_{i=1}^2\int_{\Omega_i} \partial_\ell v_i\, w_i \,\mathrm{d}x
        	          =\sum_{i=1}^2-\int_{\Omega_i} v_i\,\partial_\ell w_i\, \mathrm{d}x+ 
	          \int_{\partial\Omega_i} (T_{\partial\Omega_i}v_i)w_i\nu_i^\ell\,\mathrm{d}S \\
        &=-\int_{\Omega} v \partial_\ell w \,\mathrm{d}x+\sum_{i=1}^2\int_{\Gamma} (T_iv_i)w\nu_i^\ell\, \mathrm{d}S
           =-\int_{\Omega} v\partial_\ell w\,\mathrm{d}x,
\end{align*}
i.e., $z_\ell$ is the $\ell$th weak partial derivative of $v$. By construction $T_{\partial\Omega}v=0$, and $v$ is therefore an element in $V$.
\end{proof}  
  
\section{Transmission problem and the Robin--Robin method}\label{sec:weakform}

In order to state the Robin--Robin method we reformulate the nonlinear elliptic equation~\cref{eq:weak} on $\Omega$ into two equations on $\Omega_i$ connected via $\Gamma$, i.e., we consider a nonlinear transmission problem. To this end, on each $V_i$ we define $a_i: V_i\times V_i\rightarrow \R$ by 
\begin{displaymath}
    	a_i(u_i, v_i)=\int_{\Omega_i} \alpha(\nabla u_i)\cdot\nabla v_i+g(u_i)v_i\,\mathrm{d}x.
\end{displaymath}
We also define $f_i=f|_{\Omega_i}\in L^2(\Omega_i)$.
\begin{lemma}\label{lemma:aiprop}
If the \cref{ass:pstruct,ass:Gamma} hold, then $a_i: V_i\times V_i\rightarrow \R$  is well defined and satisfies the growth, strict monotonicity and coercivity bounds stated in \cref{lemma:aprop}, with the terms $(a,V,\Omega)$ replaced by $(a_i,V_i,\Omega_i)$.
\end{lemma}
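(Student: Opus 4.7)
The proof is a direct transcription of the argument behind \cref{lemma:aprop} to the subdomain $\Omega_i$, so the plan is simply to check that every ingredient used for $\Omega$ remains at our disposal. Since $\Omega_i$ is a bounded Lipschitz domain by \cref{ass:Gamma}, the Sobolev embedding $W^{1,p}(\Omega_i)\hookrightarrow L^{r}(\Omega_i)$ holds under the range restriction on $(p,r)$ in \cref{ass:pstruct}, and the same bootstrap argument referenced in the paper gives the equivalence of $\|\cdot\|_{V_i}$ with the usual $W^{1,p}(\Omega_i)$-norm. This is all that is needed to guarantee that the integrand $\alpha(\nabla u_i)\cdot\nabla v_i+g(u_i)v_i$ is in $L^1(\Omega_i)$ for $u_i,v_i\in V_i$.

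For the growth bound, I would apply the pointwise estimates $|\alpha(z)|\leq C_1|z|^{p-1}$ and $|g(x)|\leq C_2|x|^{r-1}$ pointwise on $\Omega_i$, then use H\"older's inequality with conjugate pairs $(p/(p-1),p)$ and $(r/(r-1),r)$ on the two terms separately. This yields
\begin{displaymath}
|a_i(u_i,v_i)|\leq C_1\bigl(\|\nabla u_i\|_{L^p(\Omega_i)^d}^{p-1}\|\nabla v_i\|_{L^p(\Omega_i)^d}+\|u_i\|_{L^r(\Omega_i)}^{r-1}\|v_i\|_{L^r(\Omega_i)}\bigr),
\end{displaymath}
which, via the norm equivalence, also proves that $a_i$ is well defined on $V_i\times V_i$.

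For the strict monotonicity bound, I would plug in $u_i-v_i$ and exploit the pointwise monotonicity inequalities from \cref{ass:pstruct}:
\begin{displaymath}
\bigl(\alpha(\nabla u_i)-\alpha(\nabla v_i)\bigr)\cdot \nabla(u_i-v_i)\geq c_1|\nabla(u_i-v_i)|^p,
\qquad
\bigl(g(u_i)-g(v_i)\bigr)(u_i-v_i)\geq c_3|u_i-v_i|^r,
\end{displaymath}
pointwise on $\Omega_i$. Integrating and using $c_1\wedge c_3$ relabeled as a single constant gives the required estimate. Coercivity is obtained identically, by replacing monotonicity with $\alpha(z)\cdot z\geq c_2|z|^p$ and $g(x)x\geq c_4|x|^r$ and integrating over $\Omega_i$.

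There is no significant obstacle: every step used for $\Omega$ in \cref{lemma:aprop} was purely pointwise on the integrand or relied only on H\"older's inequality and the $V$-norm structure, all of which transfer verbatim to $\Omega_i$ once the Lipschitz regularity of $\Omega_i$ (hence the embedding and norm equivalence) is in place. The only thing worth a word of care is that the $r$-terms in the bounds make sense, which is exactly what the norm equivalence on $V_i$ ensures.
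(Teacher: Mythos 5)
Your proposal is correct and is precisely the standard argument the paper leaves implicit: H\"older's inequality applied to each term of the integrand, combined with the pointwise growth, monotonicity and coercivity bounds from \cref{ass:pstruct}, plus the embedding $W^{1,p}(\Omega_i)\hookrightarrow L^r(\Omega_i)$ which holds on the Lipschitz subdomain $\Omega_i$ by \cref{ass:Gamma} and the restriction on $(p,r)$. The paper gives no explicit proof of either \cref{lemma:aprop} or \cref{lemma:aiprop}, treating both as direct consequences of the $p$-structure once one notes that the integrand-level arguments and the norm equivalence for $V_i$ carry over verbatim from $\Omega$ to $\Omega_i$, which is exactly the point you make.
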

The weak form of the nonlinear transmission problem is then to find $(u_1,u_2)\in V_1\times V_2$ such that 
\begin{equation}\label{eq:tran}
\left\{\begin{aligned}
	     a_i(u_i, v_i)&=(f_i, v_i)_{L^2(\Omega_i)}, & & \text{for all } v_i\in V_i^0, i=1,2,\\
	     T_1u_1&=T_2u_2, & &\\
	     \textstyle\sum_{i=1}^2 a_i(& u_i, R_i\mu)-(f_i, R_i\mu)_{L^2(\Omega_i)}=0,& &\text{for all }\mu\in \Lambda. 
\end{aligned}\right.
\end{equation}
The framework given in \cref{sec:LM} enables us to prove equivalence between the nonlinear elliptic equation and the nonlinear transmission problem along the same lines as done for linear equations~\cite[Lemma~1.2.1]{quarteroni}.

\begin{theorem}\label{thm:TPequivWE}
Let \cref{ass:pstruct,ass:Gamma} hold. If $u\in V$ solves \cref{eq:weak}, then $(u_1,u_2)=(u|_{\Omega_1},u|_{\Omega_2})$ solves \cref{eq:tran}. Conversely, if $(u_1, u_2)$ solves \cref{eq:tran}, then $u=\{u_1 \textrm{ on } \Omega_1;\, u_2 \textrm{ on } \Omega_2\}$ solves \cref{eq:weak}.
\end{theorem}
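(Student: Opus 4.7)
The plan is to prove the two directions separately, following the standard linear template but using the $V_i/\Lambda$ machinery from \cref{sec:LM}, in particular \cref{lemma:L1,lemma:L2} to transition between global and split objects.

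For the forward direction, assume $u\in V$ solves \cref{eq:weak} and set $u_i=u|_{\Omega_i}$. Since $u\in W^{1,p}_0(\Omega)$ and its trace on $\partial\Omega$ vanishes, each $u_i$ lies in $V_i$, and \cref{lemma:L1} gives $T_1u_1=T_2u_2\in\Lambda$, which is the second identity of \cref{eq:tran}. For the first identity, pick any $v_i\in V_i^0=W^{1,p}_0(\Omega_i)$ and extend by zero to $\tilde v\in V$; splitting the integrals defining $a$ over $\Omega_1\cup\Omega_2$ then reduces $a(u,\tilde v)=(f,\tilde v)_{L^2(\Omega)}$ to $a_i(u_i,v_i)=(f_i,v_i)_{L^2(\Omega_i)}$. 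For the third identity, given $\mu\in\Lambda$ form $R_1\mu\in V_1$ and $R_2\mu\in V_2$; since $T_iR_i\mu=\mu$ for each $i$, \cref{lemma:L2} produces a global $w\in V$ with $w|_{\Omega_i}=R_i\mu$, and inserting this $w$ into \cref{eq:weak} yields $\sum_i a_i(u_i,R_i\mu)=\sum_i(f_i,R_i\mu)_{L^2(\Omega_i)}$.

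For the converse, suppose $(u_1,u_2)$ solves \cref{eq:tran}. The matching trace condition $T_1u_1=T_2u_2$ together with \cref{lemma:L2} ensures that the glued function $u=\{u_1\text{ on }\Omega_1;\,u_2\text{ on }\Omega_2\}$ lies in $V$. To verify the weak equation, fix an arbitrary $v\in V$. By \cref{lemma:L1}, $\mu:=T_1(v|_{\Omega_1})=T_2(v|_{\Omega_2})$ belongs to $\Lambda$. Define the decomposition $w_i=v|_{\Omega_i}-R_i\mu$; then $w_i\in V_i$, and because $T_iw_i=\mu-\mu=0$ and $w_i$ already vanishes on $\partial\Omega_i\setminus\Gamma$, one has $T_{\partial\Omega_i}w_i=0$, i.e.\ $w_i\in V_i^0$. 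Writing $v|_{\Omega_i}=w_i+R_i\mu$ and using the first identity of \cref{eq:tran} on the $w_i$-part and the third identity on the $R_i\mu$-part gives
\begin{displaymath}
a(u,v)=\sum_{i=1}^2 a_i(u_i,w_i)+\sum_{i=1}^2 a_i(u_i,R_i\mu)
=\sum_{i=1}^2(f_i,w_i+R_i\mu)_{L^2(\Omega_i)}=(f,v)_{L^2(\Omega)}.
\end{displaymath}

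The only nontrivial point is the decomposition $v|_{\Omega_i}=w_i+R_i\mu$ with $w_i\in V_i^0$: one has to check carefully that $w_i$ has vanishing trace on all of $\partial\Omega_i$, not just $\Gamma$, which is exactly where the definitions of $V_i$ (trace zero on $\partial\Omega_i\setminus\Gamma$) and of $R_i=R_{\partial\Omega_i}E_i$ (which produces a function whose trace on $\partial\Omega_i$ is the zero-extension $E_i\mu$, hence vanishes off $\Gamma$) interact. Everything else is a routine splitting of the integrals in $a$ and $(f,\cdot)_{L^2(\Omega)}$ over the two subdomains.
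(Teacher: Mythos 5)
Your proof is correct and follows essentially the same route as the paper: both directions use \cref{lemma:L1,lemma:L2} in the same places, the forward direction tests against zero-extensions of $V_i^0$-functions and against gluings of $R_i\mu$, and the converse hinges on the decomposition $v|_{\Omega_i}=(v|_{\Omega_i}-R_i\mu)+R_i\mu$ with $v|_{\Omega_i}-R_i\mu\in V_i^0$. Your extra remark spelling out why the trace of $v|_{\Omega_i}-R_i\mu$ vanishes on all of $\partial\Omega_i$ is a useful elaboration of a step the paper states without comment, but it is not a different argument.
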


\begin{proof}
Assume that $u\in V$ solves~\cref{eq:weak} and define $(u_1,u_2)=(u|_{\Omega_1},u|_{\Omega_2})\in V_1\times V_2$. For $v_i\in V_i^0$ we can extend by zero to $w_i\in V$, by using \cref{lemma:L2}. Then
\begin{displaymath}
	    a_i(u_i, v_i)=a(u, w_i)=(f, w_i)_{L^2(\Omega)}=(f_i, v_i)_{L^2(\Omega_i)}. 
\end{displaymath}
Moreover, $T_1u_1=T_2u_2$ follows immediately from \cref{lemma:L1}. For an arbitrary $\mu\in \Lambda$ let $v_i=R_i\mu$. Then, by \cref{lemma:L2}, $v=\{v_1 \textrm{ on } \Omega_1;\, v_2 \textrm{ on } \Omega_2\}$ is an element in $V$ and 
\begin{displaymath}
	a_1(u_1,R_1\mu)+a_2(u_2,R_2\mu)=a(u,v)=(f,v)_{L^2(\Omega)}
	    =(f_1, R_1\mu)_{L^2(\Omega_1)}+(f_2, R_2\mu)_{L^2(\Omega_2)}.
\end{displaymath}
This proves that $(u_1,u_2)$ solves~\cref{eq:tran}. Conversely, let $(u_1, u_2)\in V_1\times V_2$ be a solution to~\cref{eq:tran}  and define $u=\{u_1 \textrm{ on } \Omega_1;\, u_2 \textrm{ on } \Omega_2\}$. By \cref{lemma:L2}, we have that $u\in V$. Next, consider $v\in V$ and let $v_i=v|_{\Omega_i}\in V_i$. From \cref{lemma:L1} we have that  
$\mu=T_iv_i$ is well defined and $\mu\in\Lambda$. The observation that $v_i-R_i\mu\in V_i^0$, for $i=1, 2$, then implies the equality
\begin{align*}
		a(u, v) &=\sum_{i=1}^2 a_i(u_i, v_i-R_i\mu)+a_i(u_i, R_i\mu)=\sum_{i=1}^2(f_i, v_i-R_i\mu)_{L^2(\Omega_i)}+a_i(u_i, R_i\mu) \\
		&=\sum_{i=1}^2(f_i, v_i)_{L^2(\Omega_i)}+a_i(u_i, R_i\mu)-(f_i, R_i\mu)_{L^2(\Omega_i)}=(f, v)_{L^2(\Omega)}.
\end{align*}
As $v$ can be chosen arbitrarily, $u$ solves~\cref{eq:weak}. 
\end{proof}

\begin{remark}
As the nonlinear elliptic equation~\cref{eq:weak} has a unique weak solution, \cref{thm:TPequivWE} implies that the same holds true for the nonlinear transmission problem~\cref{eq:tran}.
\end{remark}

\begin{remark}
The equivalence can easily be generalized to more than two subdomains without cross points as in \cref{fig:sub3}. However, for domain decompositions with cross points such as in \cref{fig:sub4}, the situation is more delicate. A proof of the equivalence for quasilinear equations in $H^1(\Omega)$ can be found in~\cite{schreiber}. This result can most likely be generalized to our $W^{1,p}(\Omega)$-setting, but we will study this aspects elsewhere. 
\end{remark}

In order to approximate the weak solution $(u^{n}_1,u^{n}_2)\in V_1\times V_2$ of the nonlinear transmission problem in a parallell fashion, we consider the Robin--Robin method. Multiplying by test functions and formally applying Green's formula to the equations~\cref{eq:RobinStrong} yield that the weak form of the method is given by finding  $(u^{n}_1,u^{n}_2)\in V_1\times V_2$, for $n=1,2,\ldots,$ such that
\begin{equation}\label{eq:Robin}
\left\{\begin{aligned}
	&a_1(u_1^{n+1}, v_1)=(f_1, v_1)_{L^2(\Omega_1)}, & &\text{for all } v_1\in V_1^0\\
        &a_1(u_1^{n+1}, R_1\mu)-(f_1, R_1\mu)_{L^2(\Omega_1)}+a_2(u_2^n, R_2\mu)  & & \\
			&\qquad-(f_2, R_2\mu)_{L^2(\Omega_2)}=s(T_{2}u_2^n-T_{1}u_1^{n+1}, \mu)_{L^2(\Gamma)}, & &\text{for all }\mu\in \Lambda,\\[5pt]
	&a_2(u_2^{n+1}, v_2)=(f_2, v_2)_{L^2(\Omega_2)}, & &\text{for all } v_2\in V_2^0\\
	&a_2(u_2^{n+1}, R_2\mu)-(f_2, R_2\mu)_{L^2(\Omega_2)}+a_1(u_1^{n+1}, R_1\mu) & &\\
			&\qquad-(f_1, R_1\mu)_{L^2(\Omega_1)}=s(T_{1}u_1^{n+1}-T_{2}u_2^{n+1}, \mu)_{L^2(\Gamma)}, & &\text{for all }\mu\in \Lambda,
\end{aligned}\right.
\end{equation}
where $u^0_2\in V_2$ is an initial guess and $s>0$ is the given method parameter. 

\section{Interface formulations}\label{sec:interface}

The ambition is now to reformulate the nonlinear transmission problem and the Robin--Robin method, which are all given on the domains $\Omega_{i}$, into problems and methods only stated on the interface $\Gamma$. As a preparation, we observe that nonlinear elliptic equations on $\Omega_{i}$ with inhomogeneous Dirichlet conditions have unique weak solutions.
\begin{lemma}\label{lemma:Fi}
If the \cref{ass:pstruct,ass:Gamma} hold, then for each $\eta\in \Lambda_i$ there exists a unique $u_i\in W^{1,p}(\Omega_i)$ such that
\begin{equation}\label{eq:Fi}
        a_i(u_i, v_i)=(f_i, v_i)_{L^2(\Omega_i)},\quad\text{for all }v_i\in V^0_i,
\end{equation}
and $T_{\partial\Omega_{i}}u_i=E_i\eta$ in $W^{1-1/p,p}(\partial\Omega_i)$.
\end{lemma}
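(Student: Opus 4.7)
The plan is a standard lifting argument combined with the Browder--Minty theorem applied to a monotone nonlinear operator on the reflexive Banach space $V_i^0$.

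First, I would introduce the lifting $R_i\eta\in V_i\subset W^{1,p}(\Omega_i)$, which by construction satisfies $T_{\partial\Omega_i}(R_i\eta)=T_{\partial\Omega_i}R_{\partial\Omega_i}E_i\eta=E_i\eta$. Writing the unknown as $u_i = w_i+R_i\eta$, the problem reduces to finding $w_i\in V_i^0$ such that
\begin{displaymath}
a_i(w_i+R_i\eta, v_i)=(f_i,v_i)_{L^2(\Omega_i)},\quad\text{for all } v_i\in V_i^0,
\end{displaymath}
since then $T_{\partial\Omega_i}u_i = 0 + E_i\eta = E_i\eta$ as required. Define the operator $A:V_i^0\to (V_i^0)^*$ by $\langle Aw, v\rangle = a_i(w+R_i\eta,v)-(f_i, v)_{L^2(\Omega_i)}$. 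The existence statement is then equivalent to $Aw_i=0$.

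Next, I would verify the three hypotheses of the Browder--Minty theorem for $A$ on the reflexive Banach space $V_i^0$. \emph{Hemicontinuity} follows from the continuity of $\alpha$ and $g$ together with the growth bounds in \cref{ass:pstruct}, applying the dominated convergence theorem to the Nemytskii operators appearing in $a_i$. \emph{Strict monotonicity} is immediate from \cref{lemma:aiprop}: since $(w_1+R_i\eta)-(w_2+R_i\eta)=w_1-w_2$,
\begin{displaymath}
\langle Aw_1-Aw_2,w_1-w_2\rangle \geq c_1\bigl(\|\nabla(w_1-w_2)\|_{L^p(\Omega_i)^d}^p+\|w_1-w_2\|_{L^r(\Omega_i)}^r\bigr).
\end{displaymath}
For \emph{coercivity}, I would write $a_i(w+R_i\eta,w)=a_i(w+R_i\eta,w+R_i\eta)-a_i(w+R_i\eta,R_i\eta)$, apply the coercivity bound of \cref{lemma:aiprop} to the first term and the growth bound to the second, and combine this with $|(f_i,w)_{L^2(\Omega_i)}|\leq \|f_i\|_{L^2(\Omega_i)}\|w\|_{L^2(\Omega_i)}$. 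Using Young's inequality to absorb the lower-order contributions into $c_2\bigl(\|\nabla w\|_{L^p}^p+\|w\|_{L^r}^r\bigr)$, and observing that $p\geq 2$ and $r>1$ together with the norm $\|w\|_{V_i^0}=\|\nabla w\|_{L^p(\Omega_i)^d}+\|w\|_{L^r(\Omega_i)}$ yield $\langle Aw,w\rangle/\|w\|_{V_i^0}\to\infty$ as $\|w\|_{V_i^0}\to\infty$.

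Uniqueness is then a direct consequence of strict monotonicity: if $u_i,\tilde u_i$ are two solutions, then $u_i-\tilde u_i\in V_i^0$ (both have the same trace $E_i\eta$), so testing the difference of the two equations against $v_i=u_i-\tilde u_i$ gives $0\geq c_1\bigl(\|\nabla(u_i-\tilde u_i)\|_{L^p(\Omega_i)^d}^p+\|u_i-\tilde u_i\|_{L^r(\Omega_i)}^r\bigr)$, which forces $u_i=\tilde u_i$ by the equivalence of this expression with the $V_i$-norm. The main technical care is in the coercivity step, where the mixed $L^p/L^r$ bounds and the shift by $R_i\eta$ must be balanced with Young's inequality; everything else reduces to citing \cref{lemma:aiprop} and the standard monotone operator framework.
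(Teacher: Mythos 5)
Your proof is correct and follows the standard monotone-operator route — lift the Dirichlet data via $R_i\eta$, reduce to a problem for $w_i\in V_i^0$, define the associated map $A:V_i^0\to (V_i^0)^*$, verify hemicontinuity, strict monotonicity and coercivity via \cref{lemma:aiprop}, and invoke Browder--Minty; this is precisely the approach used in the reference the paper cites for this lemma (Roub\'i\v{c}ek, Theorem 2.36). The only place that warrants a little more explicitness is the coercivity bookkeeping: you need (i) $\|\nabla(w+R_i\eta)\|_{L^p}^p\geq 2^{1-p}\|\nabla w\|_{L^p}^p-\|\nabla R_i\eta\|_{L^p}^p$ and its $L^r$ analogue, (ii) Young's inequality with a small parameter to absorb the $\|\nabla(w+R_i\eta)\|_{L^p}^{p-1}\|\nabla R_i\eta\|_{L^p}$-type terms back into the $p$-power term, and (iii) the embedding $V_i^0\hookrightarrow L^2(\Omega_i)$ (which holds since $p\geq 2$ and $\Omega_i$ is bounded) so that $|(f_i,w)_{L^2}|$ grows at most linearly in $\|w\|_{V_i^0}$ while the leading term grows superlinearly because $\min(p,r)>1$. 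With those details filled in, the argument is complete.
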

The proof can, e.g., be found in \cite[Theorem 2.36]{roubicek}. With the notation of \cref{lemma:Fi}, consider the operator 
\begin{displaymath}
F_i:\eta\mapsto u_i,
\end{displaymath}
i.e., the map from a given boundary value on $\Gamma$ to the corresponding weak solution of the nonlinear elliptic problem~\cref{eq:Fi} on $\Omega_i$.
From the statement of \cref{lemma:Fi} we see that 
\begin{displaymath}
F_i:\Lambda_i\to V_i\quad\text{and}\quad T_iF_i\eta=\eta\text{ for }\eta\in\Lambda_i.
\end{displaymath}
In other words, the operator $F_i$ is a nonlinear right inverse to $T_i$. This property will be frequently used as it, together with the boundedness and linearity of $T_i$, gives rise to bounds of the forms 
\begin{displaymath}
\|\eta\|_{\Lambda_i}\leq C_i \|F_i\eta\|_{V_i}\quad\text{and}\quad\|\eta-\mu\|_{\Lambda_i}\leq C_i \|F_i\eta-F_i\mu\|_{V_i}.
\end{displaymath}
We can now define the nonlinear Steklov--Poincar\'e operators as
\begin{gather*}
    \langle S_i\eta, \mu\rangle= a_i(F_i\eta, R_i\mu)-(f_i, R_i\mu)_{L^2(\Omega_i)},\quad\text{for all }\eta,\mu\in\Lambda_i,\quad\text{and}\\[5pt]
    \langle S\eta, \mu\rangle=\sum_{i=1}^2\langle S_i\eta, \mu\rangle=\sum_{i=1}^2  a_i(F_i\eta, R_i\mu)-(f_i, R_i\mu)_{L^2(\Omega_i)},\quad\text{for all }\eta,\mu\in\Lambda. 
\end{gather*}
Thus, we may restate the nonlinear transmission problem~\cref{eq:tran} as the Steklov--Poincar\'e equation, i.e., finding $\eta\in\Lambda$ such that
\begin{equation}\label{eq:speq}
    \langle S\eta, \mu\rangle=0,\quad\text{for all }\mu\in\Lambda.
\end{equation}
That the reformulation is possible follows directly from the definitions of the operators $F_i$ and $S$, but we state this as a lemma for future reference.
\begin{lemma}
Let the \cref{ass:pstruct,ass:Gamma} hold. If $(u_1,u_2)$ solves~\cref{eq:tran}, then $\eta=T_1u_1=T_2 u_2$ solves~\cref{eq:speq}. Conversely, if $\eta$ solves~\cref{eq:speq}, then $(u_1,u_2)=(F_1\eta,F_2\eta)$ solves~\cref{eq:tran}.
\end{lemma}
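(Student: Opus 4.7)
The statement is essentially a bookkeeping exercise once the nonlinear solution operators $F_i$ and the Steklov--Poincar\'e operator $S$ are in place, so the plan is to unfold the definitions in both directions and appeal to the uniqueness part of \cref{lemma:Fi} to identify the subdomain solutions with $F_i\eta$.

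For the forward direction, suppose $(u_1,u_2)\in V_1\times V_2$ solves \cref{eq:tran}. The second line of \cref{eq:tran} gives $T_1u_1=T_2u_2$ in $L^p(\Gamma)$; since $u_i\in V_i$ we have $T_iu_i\in\Lambda_i$, so setting $\eta=T_1u_1=T_2u_2$ yields an element of $\Lambda_1\cap\Lambda_2=\Lambda$. The first line of \cref{eq:tran} shows that $u_i$ satisfies exactly the variational problem \cref{eq:Fi}, and because $u_i\in V_i$ we have $T_{\partial\Omega_i}u_i=E_i\eta$. The uniqueness statement of \cref{lemma:Fi} therefore forces $u_i=F_i\eta$. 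Substituting this identification into the third line of \cref{eq:tran} gives
\begin{displaymath}
\langle S\eta,\mu\rangle=\sum_{i=1}^2 a_i(F_i\eta, R_i\mu)-(f_i, R_i\mu)_{L^2(\Omega_i)}=0\quad\text{for all }\mu\in\Lambda,
\end{displaymath}
which is precisely \cref{eq:speq}.

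For the converse, suppose $\eta\in\Lambda$ solves \cref{eq:speq} and define $(u_1,u_2)=(F_1\eta,F_2\eta)$. By \cref{lemma:Fi}, each $u_i$ lies in $W^{1,p}(\Omega_i)$, satisfies $a_i(u_i,v_i)=(f_i,v_i)_{L^2(\Omega_i)}$ for every $v_i\in V_i^0$, and has $T_{\partial\Omega_i}u_i=E_i\eta$. The latter equality, together with the fact that $E_i\eta$ vanishes on $\partial\Omega_i\setminus\Gamma$, shows that $u_i\in V_i$; it also gives $T_iu_i=\eta$, so $T_1u_1=T_2u_2$. Thus the first two conditions in \cref{eq:tran} are fulfilled. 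The third condition is just \cref{eq:speq} rewritten via the definition of $S$, so it holds by hypothesis.

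There is no genuine obstacle here, but I would be careful at one point: in the forward direction one must verify that the common trace $\eta$ really belongs to $\Lambda$ (not merely to each $\Lambda_i$), which requires knowing that $T_1u_1=T_2u_2$ holds as an equality in $L^p(\Gamma)$ and that both $T_iu_i$ lie in $\Lambda_i$. Both facts follow directly from $u_i\in V_i$ and the trace properties collected earlier in the section, so no additional work is needed.
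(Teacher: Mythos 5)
Your proof is correct and is exactly the unfolding of definitions that the paper has in mind; indeed, the paper omits the proof altogether, remarking only that the reformulation ``follows directly from the definitions of the operators $F_i$ and $S$,'' and your argument supplies precisely that: using the uniqueness in Lemma~\ref{lemma:Fi} to identify $u_i$ with $F_i\eta$ in the forward direction, and using $T_{\partial\Omega_i}F_i\eta = E_i\eta$ to verify membership in $V_i$ and the trace-matching condition in the converse.
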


We next turn to the Robin--Robin method, which is equivalent to the Peaceman--Rachford splitting on the interface $\Gamma$. The weak form of the  splitting is given by finding $(\eta^{n}_{1}, \eta^{n}_{2})\in \Lambda_{1}\times\Lambda_{2}$, for $n=1,2,\ldots,$ such that 
\begin{equation}\label{eq:PR}
\left\{\begin{aligned}
 	\langle(sJ+S_1)\eta_1^{n+1}, \mu\rangle&=\langle (sJ-S_2)\eta_2^n, \mu\rangle, \\
	 \langle(sJ+S_2)\eta_2^{n+1}, \mu\rangle&=\langle (sJ-S_1)\eta_1^{n+1}, \mu\rangle,
\end{aligned}\right.
\end{equation}
for all $\mu\in \Lambda$, where $\eta_2^0\in \Lambda_2$ is an initial guess. 

The observation that the Robin--Robin method and the Peaceman--Rachford splitting are equivalent for linear elliptic equations was made in~\cite{Agoshkov83}. The equivalence was also utilized in~\cite[Section 4.4.1]{discacciati} for the linear setting of the Stokes--Darcy coupling.

\begin{lemma}\label{lem:RobinPRequiv}
Let the \cref{ass:pstruct,ass:Gamma} be valid. If $(u^{n}_1, u^{n}_2)_{n\geq 1}$ is a weak Robin--Robin approximation~\cref{eq:Robin}, then $(\eta^{n}_{1}, \eta^{n}_{2})_{n\geq 1}=(T_1u^{n}_1, T_2 u^{n}_2)_{n\geq 1}$  is a weak Peaceman--Rachford approximation~\cref{eq:PR}, with $\eta^0_2=T_2u^0_2$. Conversely, if $(\eta^{n}_{1}, \eta^{n}_2)_{n\geq 1}$ is given by~\cref{eq:PR}, then $(u^{n}_1, u^{n}_2)_{n\geq 1}=(F_1\eta^{n}_{1}, F_2\eta^{n}_{2})_{n\geq 1}$ fulfils~\cref{eq:Robin}, with $u^0_2=F_2\eta^0_2$.
\end{lemma}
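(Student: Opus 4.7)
The plan is to unpack the definitions of the Steklov--Poincar\'e operators and the Riesz isomorphism and verify, via the identification $u_i = F_i \eta_i$ with $\eta_i = T_i u_i$, that the Robin--Robin equations \cref{eq:Robin} on $\Omega_i$ translate step by step to the Peaceman--Rachford equations \cref{eq:PR} on $\Gamma$. Since every manipulation will be an algebraic equivalence, both implications can be handled simultaneously. Concretely, for $n \geq 1$ the first and third equations of \cref{eq:Robin} together with $u_i^n \in V_i$, which encodes $T_{\partial\Omega_i} u_i^n = E_i \eta_i^n$, coincide with the defining relations for $F_i$ in \cref{lemma:Fi}; its uniqueness statement then yields $u_i^n = F_i \eta_i^n$. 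Conversely, if $\eta_i^n$ comes from a Peaceman--Rachford iteration and we set $u_i^n := F_i \eta_i^n$, then $u_i^n \in V_i$ (since $E_i \eta_i^n$ vanishes on $\partial\Omega_i \setminus \Gamma$) and the first and third equations of \cref{eq:Robin} are automatic.

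With this dictionary in hand, I would substitute $u_i = F_i \eta_i$ into the second equation of \cref{eq:Robin}. By the definitions of $S_1$ and $S_2$, its left-hand side becomes $\langle S_1 \eta_1^{n+1}, \mu\rangle + \langle S_2 \eta_2^n, \mu\rangle$, while the right-hand side, rewritten via $(\cdot, \cdot)_{L^2(\Gamma)} = \langle J\cdot, \cdot\rangle$, equals $s\langle J\eta_2^n - J\eta_1^{n+1}, \mu\rangle$ for all $\mu \in \Lambda$; collecting terms reproduces the first line of \cref{eq:PR}. The same computation applied to the fourth equation of \cref{eq:Robin} produces the second line, and running the algebra backwards establishes the converse implication.

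The only real subtlety is the base case $n = 0$ in the forward direction: for $\eta_2^0 := T_2 u_2^0$ to serve as an initial datum for the Peaceman--Rachford recursion, the identity $a_2(u_2^0, R_2\mu) - (f_2, R_2\mu)_{L^2(\Omega_2)} = \langle S_2 \eta_2^0, \mu\rangle$ must hold, which forces the compatibility $u_2^0 = F_2 \eta_2^0$ on the initial guess. I expect this to be tacit in the interpretation of a ``weak Robin--Robin approximation'' and to be carried along inductively for $n \geq 1$ by the identification step above. Once this is acknowledged, no deeper obstacle remains, and the proof is essentially a careful bookkeeping of definitions.
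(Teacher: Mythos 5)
Your proof is correct and follows essentially the same route as the paper: identify $u_i^n = F_i\eta_i^n$ for $n\geq 1$ via the interior equations of \cref{eq:Robin} and the uniqueness in \cref{lemma:Fi}, then translate the interface equations into \cref{eq:PR} using the definitions of $S_i$ and $J$, and conversely set $u_i^n:=F_i\eta_i^n$. The subtlety you flag at $n=0$ is a genuine one that the paper's own proof glosses over: the substitution of $F_2\eta_2^0$ for $u_2^0$ in the second equation of \cref{eq:Robin} requires the compatibility $u_2^0 = F_2 T_2 u_2^0$, which does not hold for arbitrary $u_2^0\in V_2$. You are right that this should be read as an implicit constraint on the initial guess; note that the converse direction (which is what \cref{cor:PRisRobin} and the convergence proof actually use) imposes $u_2^0=F_2\eta_2^0$ explicitly and so avoids the issue entirely.
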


\begin{proof}
First assume that $(u^{n}_1, u^{n}_2)_{n\geq 1}\subset V_1\times V_2$ is a weak Robin--Robin approximation and define $\eta^n_i=T_iu^n_i\in \Lambda_i$. This definition, the existence of a unique solution of~\cref{eq:Fi} and the first and third assertions of~\cref{eq:Robin} yield the identification $u_i^n=F_i\eta_i^n$. Inserting this into the second and fourth assertion of~\cref{eq:Robin} gives us 
\begin{align*}
	   &s(T_1F_1\eta_1^{n+1}, \mu)_{L^2(\Gamma)}+a_1(F_1\eta_1^{n+1}, R_1\mu)-(f, R_1\mu)_{L^2(\Omega_1)}\\ 
	   &\quad=s(T_2F_2\eta_2^n, \mu)_{L^2(\Gamma)}-a_2(F_2\eta_2^n,R_2\mu)+(f, R_2\mu)_{L^2(\Omega_2)},\quad\text{and}\\[5pt]
	   &s(T_2F_2\eta_2^{n+1}, \mu)_{L^2(\Gamma)}+a_2(F_2\eta_2^{n+1}, R_2\mu)-(f, R_2\mu)_{L^2(\Omega_2)}\\ 
	   &\quad=s(T_1F_1\eta_1^{n+1}, \mu)_{L^2(\Gamma)}-a_1(F_1\eta_1^{n+1},R_1\mu)+(f, R_1\mu)_{L^2(\Omega_1)},
\end{align*}
for all $\mu\in\Lambda$, which is precisely the weak form of the Peaceman--Rachford splitting~\cref{eq:PR}, with $\eta^0_2=T_2u^0_2$. Conversely, suppose that $(\eta^{n}_{1}, \eta^{n}_2)_{n\geq 1}\subset  \Lambda_1\times\Lambda_2$ is a weak Peaceman--Rachford approximation and define $u_i^n=F_i\eta^n_i\in V_i$. Inserting this into~\cref{eq:PR} directly gives that $(u^{n}_1, u^{n}_2)_{n\geq 1}$, with $u_2^0=F_2\eta^0_2$, is a weak Robin--Robin approximation~\cref{eq:Robin}.
\end{proof}

\begin{remark}
For now we do not know if the weak Robin--Robin and Peaceman--Rachford approximations actually exist, but we will return to this issue shortly.
\end{remark}

\section{Properties of the nonlinear Steklov--Poincar\'e operators}\label{sec:SP}

We proceed by deriving the central properties of the Steklov--Poincar\'e operators $S_i, S$ when interpreted as maps from $\Lambda_i, \Lambda$ into the corresponding dual spaces.

\begin{lemma}
If the \cref{ass:pstruct,ass:Gamma} hold, then $S_i: \Lambda_i\rightarrow \Lambda_i^*$ and $S:\Lambda\rightarrow\Lambda^*$ are well defined. 
\end{lemma}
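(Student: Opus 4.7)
The plan is to verify, for each fixed $\eta$, that $\mu \mapsto \langle S_i\eta,\mu\rangle$ is a bounded linear functional on $\Lambda_i$, and similarly that $\mu\mapsto\langle S\eta,\mu\rangle$ is bounded linear on $\Lambda$. Linearity is immediate: $R_i:\Lambda_i\to V_i$ is linear, the map $v\mapsto a_i(F_i\eta,v)$ is linear because the integrand depends linearly on $v$, and $\mu\mapsto(f_i,R_i\mu)_{L^2(\Omega_i)}$ is linear as a composition.

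For boundedness of $S_i\eta$ I would apply the growth estimate from \cref{lemma:aiprop} to obtain
\begin{displaymath}
|a_i(F_i\eta, R_i\mu)| \leq C_1\bigl(\|\nabla F_i\eta\|^{p-1}_{L^p(\Omega_i)^d}+\|F_i\eta\|^{r-1}_{L^r(\Omega_i)}\bigr)\|R_i\mu\|_{V_i},
\end{displaymath}
using that $\|\nabla R_i\mu\|_{L^p(\Omega_i)^d}$ and $\|R_i\mu\|_{L^r(\Omega_i)}$ are each controlled by $\|R_i\mu\|_{V_i}$. For the source term I would use Cauchy--Schwarz together with the continuous embedding $V_i\hookrightarrow L^2(\Omega_i)$, which is valid because $p\geq 2$ on the bounded domain $\Omega_i$ (so $V_i\hookrightarrow W^{1,p}(\Omega_i)\hookrightarrow L^p(\Omega_i)\hookrightarrow L^2(\Omega_i)$), yielding
\begin{displaymath}
|(f_i, R_i\mu)_{L^2(\Omega_i)}|\leq \|f_i\|_{L^2(\Omega_i)}\|R_i\mu\|_{L^2(\Omega_i)}\leq C\|f_i\|_{L^2(\Omega_i)}\|R_i\mu\|_{V_i}.
\end{displaymath}
The bound $\|R_i\mu\|_{V_i}\leq C\|\mu\|_{\Lambda_i}$ from the boundedness of $R_i$ then closes the estimate and delivers $|\langle S_i\eta,\mu\rangle|\leq C(\eta,f_i)\|\mu\|_{\Lambda_i}$, proving $S_i\eta\in\Lambda_i^*$.

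For $S$, I would simply sum the two $S_i$-estimates. Since the definition of the $\Lambda$-norm gives $\|\mu\|_{\Lambda_i}\leq\|\mu\|_{\Lambda}$ for $i=1,2$, this immediately yields $|\langle S\eta,\mu\rangle|\leq C(\eta,f)\|\mu\|_{\Lambda}$, so $S\eta\in\Lambda^*$.

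I do not anticipate a serious obstacle; the statement is a routine consequence of the growth bound for $a_i$, the boundedness of the right inverse $R_i$, and standard Sobolev embeddings. The only point requiring mild care is that $F_i\eta$ is only asserted to lie in $W^{1,p}(\Omega_i)$ rather than in $V_i$, but the growth estimate of \cref{lemma:aiprop} involves only the $L^p$-norm of the gradient and the $L^r$-norm of the function, both of which are finite on $W^{1,p}(\Omega_i)$ by the norm equivalence recalled around~\cref{eq:embedding}, so the argument goes through unchanged.
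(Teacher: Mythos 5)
Your proposal is correct and follows essentially the same route as the paper: linearity by inspection, the growth bound of \cref{lemma:aiprop} for the $a_i$-term, Cauchy--Schwarz plus the embedding $V_i\hookrightarrow L^2(\Omega_i)$ for the source term, the boundedness of $R_i$ to close the estimate, and summation for $S$. Your final caveat is unneeded since the paper records that $F_i$ maps $\Lambda_i$ into $V_i$, but it does not affect the argument.
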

\begin{proof}
The linearity of the functionals $S_i\eta$ and $S\eta$ follow by definition. As $F_i\eta\in V_i$ we have, by \cref{lemma:aiprop}, that 
\begin{align*}
	|\langle S_i\eta, \mu \rangle|&\leq |a_i(F_i\eta, R_i\mu)|+|(f_i, R_i\mu)_{L^2(\Omega_i)}|\\
	&\leq c_i(\|\nabla F_i\eta\|^{p-1}_{L^p(\Omega_i)^d}\|\nabla R_i\mu\|^{\phantom{p-1}}_{L^p(\Omega_i)^d}+\|F_i\eta\|_{L^r(\Omega_i)}^{r-1}\|R_i\mu\|^{\phantom{r-1}}_{L^r(\Omega_i)})\\
	&\quad+\|f_i\|_{L^2(\Omega_i)}\|R_i\mu\|_{L^2(\Omega_i)}\\
	&\leq C_i(\|\nabla F_i\eta\|_{V_i},\|f_i\|_{L^2(\Omega_i)})\|R_i\mu\|_{V_i}\leq C_i\|\mu\|_{\Lambda_i},
\end{align*}
for all $\mu\in\Lambda_i$. Thus, $S_i\eta$ is a bounded functional on $\Lambda_i$. The boundedness of $S\eta$ follows directly by summing up the bounds for $S_i$ . 
\end{proof}

\begin{lemma}\label{lemma:monotonicity}
If the \cref{ass:pstruct,ass:Gamma} hold, then the operators $S_i: \Lambda_i\rightarrow \Lambda_i^*$ and $S:\Lambda\rightarrow\Lambda^*$ are strictly monotone with 
\begin{equation*}
      \langle S_i\eta-S_i\mu, \eta-\mu \rangle\geq c_i\bigl(\|\nabla(F_i \eta-F_i \mu)\|_{L^p(\Omega_i)^d}^p+\|F_i \eta-F_i \mu\|_{L^r(\Omega_i)}^r\bigr), 
\end{equation*}
for all $\eta, \mu\in\Lambda_i$, and
\begin{equation*}
         \langle S\eta-S\mu, \eta-\mu \rangle\geq c\sum_{i=1}^2 \bigl(\|\nabla(F_i \eta-F_i \mu)\|_{L^p(\Omega_i)^d}^p+\|F_i \eta-F_i \mu\|_{L^r(\Omega_i)}^r\bigr), 
\end{equation*}
for all $\eta, \mu\in\Lambda$, respectively. 
\end{lemma}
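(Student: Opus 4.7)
The plan is to reduce the strict monotonicity of $S_i$ on $\Lambda_i$ to the strict monotonicity of $a_i$ on $V_i$ given in \cref{lemma:aiprop}. The key idea is that, in the dual pairing defining $\langle S_i\eta,\mu\rangle$, the lifting $R_i\mu$ can be replaced by any other element of $V_i$ sharing the same trace on $\Gamma$, because the difference is a valid test function in $V_i^0$ on which $F_i\eta$ satisfies the Dirichlet problem~\cref{eq:Fi}.

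Concretely, I first take $\eta,\mu\in\Lambda_i$ and set $w=R_i(\eta-\mu)\in V_i$. Since $T_iw=\eta-\mu=T_iF_i\eta-T_iF_i\mu$ and all three elements vanish on $\partial\Omega_i\setminus\Gamma$, I would argue that $w-(F_i\eta-F_i\mu)\in V_i^0$. Applying \cref{lemma:Fi} to both $F_i\eta$ and $F_i\mu$ with this test function then yields
\begin{displaymath}
 a_i(F_i\eta,w)-a_i(F_i\mu,w)=a_i(F_i\eta,F_i\eta-F_i\mu)-a_i(F_i\mu,F_i\eta-F_i\mu).
\end{displaymath}
Combined with the definition of $S_i$, this gives the key identity
\begin{displaymath}
 \langle S_i\eta-S_i\mu,\eta-\mu\rangle=a_i(F_i\eta,F_i\eta-F_i\mu)-a_i(F_i\mu,F_i\eta-F_i\mu).
\end{displaymath}
At this point the strict monotonicity bound for $a_i$ from \cref{lemma:aiprop} yields the claimed lower bound on $\langle S_i\eta-S_i\mu,\eta-\mu\rangle$ directly. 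Strictness itself then follows since, if the right-hand side vanishes, one obtains $F_i\eta=F_i\mu$ from the $L^r$-term, and consequently $\eta=T_iF_i\eta=T_iF_i\mu=\mu$.

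The statement for $S$ is an immediate consequence: for $\eta,\mu\in\Lambda\subset\Lambda_1\cap\Lambda_2$, the definition gives $\langle S\eta-S\mu,\eta-\mu\rangle=\sum_{i=1}^2\langle S_i\eta-S_i\mu,\eta-\mu\rangle$, and I apply the bound for $S_i$ to each summand. I expect the main subtlety of the proof to be the bookkeeping in the first step, namely justifying $w-(F_i\eta-F_i\mu)\in V_i^0$ so that the Dirichlet equation~\cref{eq:Fi} can be invoked; once the test function is legal, everything reduces to a transparent application of \cref{lemma:aiprop}.
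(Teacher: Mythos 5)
Your argument is correct and coincides with the paper's own proof: both exploit that $R_i(\eta-\mu)-(F_i\eta-F_i\mu)\in V_i^0$, so the Dirichlet characterization of $F_i\eta$ and $F_i\mu$ in \cref{lemma:Fi} lets you swap $R_i(\eta-\mu)$ for $F_i\eta-F_i\mu$ in the dual pairing, after which the strict monotonicity of $a_i$ from \cref{lemma:aiprop} gives the stated lower bound, and summation over $i$ handles $S$. The only cosmetic difference is that the paper calls your $w-(F_i\eta-F_i\mu)$ by the name $w_i$ directly and states the consequence $a_i(F_i\eta,w_i)-a_i(F_i\mu,w_i)=0$, while you spell out the linearity-in-the-second-argument rearrangement and add the (valid, and worth noting) observation that vanishing of the lower bound forces $F_i\eta=F_i\mu$ and hence $\eta=\mu$ via $T_iF_i=\mathrm{id}$.
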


\begin{proof}
Since, $w_i=R_i(\eta-\mu)-(F_i\eta-F_i\mu)\in V_i^0$, for all $\eta,\mu\in\Lambda_i$, we have according to the definition of $F_i$ that 
\begin{equation}\label{eq:trick}
a_i(F_i\eta,w_i)-a_i(F_i\mu,w_i)=0.
\end{equation}
By this equality and \cref{lemma:aiprop}, it follows that 
\begin{align*}
	        \langle S_i\eta-S_i\mu &, \eta-\mu \rangle =a_i\bigl(F_i\eta, R_i(\eta-\mu)\bigr)-a_i\bigl(F_i\mu, R_i(\eta-\mu)\bigr) \\
	        &=a_i(F_i\eta,w_i)+a_i(F_i\eta, F_i\eta-F_i\mu)- a_i(F_i\mu,w_i)-a_i(F_i\mu, F_i\eta-F_i\mu)\\
	        &\geq c_i\bigl(\|\nabla(F_i\eta-F_i\mu)\|_{L^p(\Omega_i)^d}^p+\|F_i\eta-F_i\mu\|_{L^r(\Omega_i)}^r\bigr),
\end{align*}
for all $\eta, \mu\in\Lambda_i$, which proves the monotonicity bound for $S_i$. The bound for $S$ follows directly by summing up the bounds for $S_i$.
\end{proof}

\begin{lemma}\label{lemma:coercivity}
If the \cref{ass:pstruct,ass:Gamma} hold, then the operators $S_i: \Lambda_i\rightarrow \Lambda_i^*$ and $S:\Lambda\rightarrow\Lambda^*$ are coercive, i.e, 
\begin{displaymath}
\lim_{\|\eta\|_{\Lambda_i}\to\infty}\frac{\langle S_i\eta ,\eta\rangle}{\|\eta\|_{\Lambda_i}}=\infty
\quad\text{and}\quad
\lim_{\|\eta\|_{\Lambda}\to\infty}\frac{\langle S\eta ,\eta\rangle}{\|\eta\|_{\Lambda}}=\infty.
\end{displaymath}
\end{lemma}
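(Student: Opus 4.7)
The strategy is to reduce $\langle S_i\eta,\eta\rangle$ to a pairing that lives entirely on $\Omega_i$, and then extract coercivity from the coercivity of $a_i$. First I would mimic the trick used in the proof of \cref{lemma:monotonicity}: since $T_i R_i\eta=\eta=T_iF_i\eta$, the element $w_i=R_i\eta-F_i\eta$ has vanishing trace on all of $\partial\Omega_i$ and hence lies in $V_i^0$. The defining equation of $F_i\eta$ (\cref{lemma:Fi}) then gives $a_i(F_i\eta,w_i)=(f_i,w_i)_{L^2(\Omega_i)}$, which, when inserted into the definition of $S_i$, collapses to the identity
\begin{displaymath}
\langle S_i\eta,\eta\rangle = a_i(F_i\eta,F_i\eta)-(f_i,F_i\eta)_{L^2(\Omega_i)}.
\end{displaymath}

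Next I would apply the coercivity bound of $a_i$ in \cref{lemma:aiprop}, giving $a_i(F_i\eta,F_i\eta)\geq c_2\bigl(\|\nabla F_i\eta\|_{L^p(\Omega_i)^d}^p+\|F_i\eta\|_{L^r(\Omega_i)}^r\bigr)$. For the source term I would estimate $|(f_i,F_i\eta)_{L^2(\Omega_i)}|\leq \|f_i\|_{L^2(\Omega_i)}\|F_i\eta\|_{L^2(\Omega_i)}$ and use that $p\geq 2$ together with boundedness of $\Omega_i$ to embed $L^p(\Omega_i)\hookrightarrow L^2(\Omega_i)$, then rely on the norm equivalence~\cref{eq:embedding} on $V_i$ to bound $\|F_i\eta\|_{L^2(\Omega_i)}$ by $\|\nabla F_i\eta\|_{L^p(\Omega_i)^d}+\|F_i\eta\|_{L^r(\Omega_i)}$. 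Since $p,r>1$, Young's inequality absorbs this linear quantity into the superlinear terms, yielding
\begin{displaymath}
\langle S_i\eta,\eta\rangle \geq \tfrac{c_2}{2}\bigl(\|\nabla F_i\eta\|_{L^p(\Omega_i)^d}^p+\|F_i\eta\|_{L^r(\Omega_i)}^r\bigr)-C,
\end{displaymath}
where $C$ depends only on $f_i$ and the structural constants.

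Finally I would exploit the boundedness of $T_i$, which yields $\|\eta\|_{\Lambda_i}=\|T_iF_i\eta\|_{\Lambda_i}\leq C_i\|F_i\eta\|_{V_i}$, so that the sum $A+B:=\|\nabla F_i\eta\|_{L^p(\Omega_i)^d}+\|F_i\eta\|_{L^r(\Omega_i)}$ tends to infinity together with $\|\eta\|_{\Lambda_i}$. A short case analysis (whichever of $A$ or $B$ dominates the sum) gives $A^p+B^r\geq (\tfrac{A+B}{2})^{\min(p,r)}$ once $A+B\geq 2$, hence $A^p+B^r\gtrsim\|\eta\|_{\Lambda_i}^{\min(p,r)}$. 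Since $\min(p,r)>1$, dividing by $\|\eta\|_{\Lambda_i}$ proves coercivity of $S_i$. Coercivity of $S$ then follows by summing the two bounds and applying the convexity estimate $a^q+b^q\geq 2^{1-q}(a+b)^q$ with $q=\min(p,r)>1$ to the pair $(\|\eta\|_{\Lambda_1},\|\eta\|_{\Lambda_2})$.

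I expect the only subtle point to be the absence of any a priori upper bound on $\|F_i\eta\|_{V_i}$ in terms of $\|\eta\|_{\Lambda_i}$, since $F_i$ is nonlinear. This is precisely why the argument must be driven by the superlinear ($p$ and $r$) lower bound from the coercivity of $a_i$ overwhelming the linear-in-$F_i\eta$ source term, with the trace bound on $T_i$ only used in the direction needed to force the denominator into the estimate.
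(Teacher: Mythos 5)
Your proposal is correct and follows essentially the same route as the paper: both collapse $\langle S_i\eta,\eta\rangle$ to $a_i(F_i\eta,F_i\eta)-(f_i,F_i\eta)_{L^2(\Omega_i)}$ via $R_i\eta-F_i\eta\in V_i^0$, invoke the coercivity bound on $a_i$, and exploit the superlinear growth ($p,r>1$) together with $\|\eta\|_{\Lambda_i}\leq C_i\|F_i\eta\|_{V_i}$. The only difference is cosmetic bookkeeping in the last step: the paper packages the estimate as $P(x,y)=(x^p+y^r)/(x+y)\to\infty$, while you obtain the same lower bound via Young's inequality and a case analysis; the underlying arithmetic is the same.
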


\begin{proof}
As $\| F_i\eta\|_{V_i}=\|\nabla F_i \eta\|_{L^p(\Omega_i)^d}+\|F_i \eta\|_{L^r(\Omega_i)}\geq c_i\|\eta\|_{\Lambda_i}$, we have that 
\begin{displaymath}
P(\|\nabla F_i \eta\|_{L^p(\Omega_i)^d}, \|F_i \eta\|_{L^r(\Omega_i)})\to\infty,\quad\text{as }\|\eta\|_{\Lambda_i}\to\infty,
\end{displaymath}
where $P(x,y)=(x^p+y^r)/(x+y)$. In particular, we assume from now on that 
\begin{displaymath}
P(\|\nabla F_i \eta\|_{L^p(\Omega_i)^d}, \|F_i \eta\|_{L^r(\Omega_i)})\geq \|f_i\|_{L^2(\Omega_i)}.
\end{displaymath}
By observing that $R_i\eta-F_i\eta\in V_i^0$, \cref{lemma:aiprop} yields the lower bound
\begin{align*}
	\langle S_i\eta ,\eta\rangle&=a_i(F_i\eta, F_i\eta)+a_i(F_i\eta, R_i\eta-F_i\eta)-(f_i, R_i\eta)_{L^2(\Omega_i)}\\
		&=a_i(F_i\eta, F_i\eta)+(f_i, R_i\eta-F_i\eta)_{L^2(\Omega_i)}-(f_i, R_i\eta)_{L^2(\Omega_i)}\\
	        &\geq c_i(\|\nabla F_i\eta\|^p_{L^p(\Omega_i)^d}+\|F_i\eta\|^r_{L^r(\Omega_i)})-(f_i, F_i\eta)_{L^2(\Omega_i)}\\
	        &\geq c_i P(\|\nabla F_i \eta\|_{L^p(\Omega_i)^d}, \|F_i \eta\|_{L^r(\Omega_i)})\|F_i\eta\|_{V_i}-\|f_i\|_{L^2(\Omega_i)}\|F_i\eta\|_{V_i}\\
	        &\geq c_i\bigl( P(\|\nabla F_i \eta\|_{L^p(\Omega_i)^d}, \|F_i \eta\|_{L^r(\Omega_i)})-\|f_i\|_{L^2(\Omega_i)}\bigr)\|\eta\|_{\Lambda_i},
\end{align*}
which implies that $S_i$ is coercive. For $S$ we obtain that
\begin{displaymath}
	 \frac{\langle S\eta ,\eta\rangle}{\|\eta\|_\Lambda}\geq 
	 \frac{\sum_{i=1}^2\bigl(c_i\, P(\|\nabla F_i \eta\|_{L^p(\Omega_i)^d}, \|F_i \eta\|_{L^r(\Omega_i)})-\|f_i\|_{L^2}\bigr)\|\eta\|_{\Lambda_i}}{\|\eta\|_{\Lambda_1}+\|\eta\|_{\Lambda_2}},
\end{displaymath}
which tends to infinity as $\|\eta\|_\Lambda$ tends to infinity. Thus, $S$ is also coercive. 
\end{proof}

In order to prove that the operators $S_i,S$ are demicontinuous, we first consider the continuity of the operators~$F_i$.
\begin{lemma}\label{lemma:Fconv}
If the \cref{ass:pstruct,ass:Gamma} hold, then the nonlinear operators $F_i:\Lambda_i\to V_i$ are continuous.
\end{lemma}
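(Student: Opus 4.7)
The plan is to fix a sequence $\eta_n \to \eta$ in $\Lambda_i$, set $u_n = F_i \eta_n$ and $u = F_i \eta$, and show $u_n \to u$ in $V_i$. The key observation is that, although $u_n - u$ does not vanish on $\Gamma$ and so cannot be used as a test function in $V_i^0$, the corrected difference $u_n - u - R_i(\eta_n - \eta)$ does lie in $V_i^0$, since $T_i(u_n - u) = \eta_n - \eta = T_i R_i(\eta_n - \eta)$. This is exactly what allows the monotonicity trick of \cref{eq:trick} to be brought to bear.

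The first step is to derive the uniform bound $\sup_n \|u_n\|_{V_i} < \infty$. I would test the defining equation for $u_n$ with $u_n - R_i \eta_n \in V_i^0$ and rearrange to isolate $a_i(u_n, u_n)$. Applying the coercivity estimate of \cref{lemma:aiprop} on the left and the growth estimate to $a_i(u_n, R_i \eta_n)$ on the right, combined with Young's inequality tailored to the exponents $p$ and $r$, yields a bound of the form
\begin{displaymath}
c\bigl(\|\nabla u_n\|_{L^p(\Omega_i)^d}^p + \|u_n\|_{L^r(\Omega_i)}^r\bigr) \leq C\bigl(\|R_i \eta_n\|_{V_i}^p + \|R_i \eta_n\|_{V_i}^r\bigr) + \bigl|(f_i, u_n - R_i\eta_n)_{L^2(\Omega_i)}\bigr|.
\end{displaymath}
The $L^2$-pairing with $f_i$ is controlled by the embedding $V_i \hookrightarrow L^2(\Omega_i)$, which holds because $p \geq 2$ and $\Omega_i$ is bounded, followed by one more Young's inequality. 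Since $\|R_i \eta_n\|_{V_i} \leq C \|\eta_n\|_{\Lambda_i}$ stays bounded along the convergent sequence, absorbing the $V_i$-norm of $u_n$ on the left gives the desired uniform bound.

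The second step is the monotonicity argument underlying \cref{eq:trick}. Testing the equations for both $u_n$ and $u$ with $u_n - u - R_i(\eta_n - \eta) \in V_i^0$ and subtracting yields
\begin{displaymath}
a_i(u_n, u_n - u) - a_i(u, u_n - u) = a_i\bigl(u_n, R_i(\eta_n - \eta)\bigr) - a_i\bigl(u, R_i(\eta_n - \eta)\bigr).
\end{displaymath}
Strict monotonicity in \cref{lemma:aiprop} lower-bounds the left side by $c_1\bigl(\|\nabla(u_n - u)\|_{L^p(\Omega_i)^d}^p + \|u_n - u\|_{L^r(\Omega_i)}^r\bigr)$, while the growth bound, together with the already established uniform $V_i$-boundedness of $\{u_n\}$ and $u$, upper-bounds the right side by $C \|R_i(\eta_n - \eta)\|_{V_i} \leq C \|\eta_n - \eta\|_{\Lambda_i}$, which tends to zero. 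Both the gradient and $L^r$ contributions to $\|u_n - u\|_{V_i}$ therefore vanish, so $u_n \to u$ in $V_i$.

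The main technical obstacle is the a priori bound in the first step, where the two distinct growth exponents $p$ and $r$ must be juggled via Young's inequality and the $f_i$-term absorbed by invoking the Sobolev embedding permitted by $p \geq 2$. Once that bound is secured, the monotonicity step is essentially the nonlinear analogue of the standard linear energy argument.
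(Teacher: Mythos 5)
Your proof is correct and follows the same overall two-step structure as the paper's --- establish a uniform $V_i$-bound, then exploit the monotonicity trick with the test function $u_n - u - R_i(\eta_n-\eta)\in V_i^0$ to pass to the limit --- and your second step is essentially identical to the paper's final step of setting $\mu=\eta^k$ in its estimate \cref{eq:contbound1}. Where you diverge is in how the a priori bound is obtained. You test the defining equation for $u_n$ with $u_n - R_i\eta_n \in V_i^0$, isolate $a_i(u_n,u_n)$, and combine coercivity with the growth estimate and Young's inequality; because this leaves the $(f_i,\cdot)$-pairing on the right, you also need the embedding $V_i\hookrightarrow L^2(\Omega_i)$ (valid since $p\ge 2$ and the $V_i$-norm is equivalent to the $W^{1,p}(\Omega_i)$-norm). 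The paper instead reuses its monotonicity estimate \cref{eq:contbound1} with $\mu=0$, comparing $F_i\eta$ with $F_i 0$ so that the $f_i$-contributions cancel via \cref{eq:trick}, and then applies the elementary inequality $|x|^p - 2^{p-1}|y|^p \le 2^{p-1}|x-y|^p$ to untangle $\|\nabla F_i\eta\|_{L^p(\Omega_i)^d}$ and $\|F_i\eta\|_{L^r(\Omega_i)}$ from the difference. Your route is the more familiar direct energy estimate and avoids that algebraic manipulation; the paper's route is slightly more self-contained in that it dispenses with a separate appeal to the $L^2$-embedding at this step. Both are valid and yield the same conclusion.
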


\begin{proof}
Let $\eta,\mu$ be elements in $\Lambda_i$. Using the equality~\cref{eq:trick} together with \cref{lemma:aiprop} gives us the bound 
\begin{equation}\label{eq:contbound1}
\begin{aligned}
	        c_i&(\|\nabla (F_i\eta-F_i\mu)\|^p_{L^p(\Omega_i)^d}+\|F_i\eta-F_i\mu\|^r_{L^r(\Omega_i)})\\
	        &\leq a_i(F_i\eta, F_i\eta-F_i\mu)-a_i(F_i\mu, F_i\eta-F_i\mu)\\
	        &=a_i\bigl(F_i\eta, R_i(\eta-\mu)\bigr)-a_i(F_i\eta,w_i)-a_i\bigl(F_i\mu, R_i(\eta-\mu)\bigr)+a_i(F_i\mu,w_i)\\
	        &\leq C_i \bigl(\|\nabla F_i\eta\|^{p-1}_{L^p(\Omega_i)^d}\|\nabla R_i(\eta-\mu)\|^{\phantom{p-1}}_{L^p(\Omega_i)^d}
	      			+\|F_i\eta\|_{L^r(\Omega_i)}^{r-1}\|R_i(\eta-\mu)\|^{\phantom{r-1}}_{L^r(\Omega_i)}\\
	        &\quad+\|\nabla F_i\mu\|^{p-1}_{L^p(\Omega_i)^d}\|\nabla R_i(\eta-\mu)\|^{\phantom{p-1}}_{L^p(\Omega_i)^d}
	        	 		+\|F_i\mu\|_{L^r(\Omega_i)}^{r-1}\|R_i(\eta-\mu)\|^{\phantom{r-1}}_{L^r(\Omega_i)}\bigr) \\
	        &\leq C_i \big(\|\nabla F_i\eta\|^{p-1}_{L^p(\Omega_i)^d}+\|F_i\eta\|_{L^r(\Omega_i)}^{r-1}\\
	        &\quad+\|\nabla F_i\mu\|^{p-1}_{L^p(\Omega_i)^d}+\|F_i\mu\|_{L^r(\Omega_i)}^{r-1}\big)\|\eta-\mu\|_{\Lambda_i}.
\end{aligned}
\end{equation}
Letting $\mu=0$ in~\cref{eq:contbound1} and employing the inequality $|x|^p-2^{p-1}|y|^p\leq 2^{p-1}|x-y|^p$, twice, yield that
\begin{align*}
	        c_i &(\|\nabla F_i\eta\|^p_{L^p(\Omega_i)^d}+\|F_i\eta\|^r_{L^r(\Omega_i)}-2^{p-1}\|\nabla F_i0\|^p_{L^p(\Omega_i)^d}
	        			-2^{r-1}\|F_i0\|^r_{L^r(\Omega_i)})\\
	        &\leq c_i(2^{p-1}\|\nabla (F_i\eta-F_i0)\|^p_{L^p(\Omega_i)^d}+2^{r-1}\|F_i\eta-F_i0\|^r_{L^r(\Omega_i)})\\
	        &\leq C_i \big(\|\nabla F_i\eta\|^{p-1}_{L^p(\Omega_i)^d}+\|F_i\eta\|_{L^r(\Omega_i)}^{r-1}
	         +\|\nabla F_i0\|^{p-1}_{L^p(\Omega_i)^d}+\|F_i0\|_{L^r(\Omega_i)}^{r-1}\big)\|\eta\|_{\Lambda_i}.
\end{align*}
Thus, we have a bound of the form
\begin{equation}\label{eq:contbound2}
\frac{\|\nabla F_i\eta\|^p_{L^p(\Omega_i)^d}+\|F_i\eta\|^r_{L^r(\Omega_i)}-c_1}{\|\nabla F_i\eta\|^{p-1}_{L^p(\Omega_i)^d}+\|F_i\eta\|_{L^r(\Omega_i)}^{r-1}+c_2}\leq C_i\|\eta\|_{\Lambda_i},
\end{equation}
for every $\eta\in\Lambda_i$, where $c_\ell=c_\ell(\|\nabla F_i0\|_{L^p(\Omega_i)^d},\|F_i0\|_{L^r(\Omega_i)})\geq 0$. 

Assume that $\eta^k\to\eta$ in $\Lambda_i$. As $\eta^k$ is bounded in $\Lambda_i$, the bound~\cref{eq:contbound2} implies that 
$\nabla F_i\eta^k$ and $F_i\eta^k$ are bounded in $L^p(\Omega_i)^d$ and $L^r(\Omega_i)$, respectively. By setting $\mu=\eta^k$ in~\cref{eq:contbound1}, we finally obtain that $\nabla F_i\eta^k\to\nabla F_i\eta$ in in $L^p(\Omega_i)^d$ and $F_i\eta^k\to F_i\eta$ in $L^r(\Omega_i)$, i.e., $F_i\eta^k\to F_i\eta$ in $V_i$.
\end{proof}

\begin{lemma}\label{lem:demicont}
If the \cref{ass:pstruct,ass:Gamma} hold, then the operators $S_i: \Lambda_i\rightarrow \Lambda_i^*$ and $S:\Lambda\rightarrow\Lambda^*$ are demicontinuous, i.e., if $\eta_i^k\to\eta_i$ in $\Lambda_i$ and  $\eta^k\to\eta$ in $\Lambda$ then 
\begin{displaymath}
\langle S_i\eta^k_i-S_i\eta_i,\mu_i\rangle\to 0\quad\text{and}\quad\langle S\eta^k-S\eta,\mu\rangle\to 0,
\end{displaymath}
for all $\mu_i\in\Lambda_i$ and $\mu\in\Lambda$.
\end{lemma}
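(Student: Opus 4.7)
The plan is to reduce the demicontinuity of $S_i$ to continuity of the Nemytskii superposition operators induced by $\alpha$ and $g$, and then to invoke the previously established continuity of the solution map $F_i$ in \cref{lemma:Fconv}. Since $(f_i,R_i\mu_i)_{L^2(\Omega_i)}$ does not depend on $\eta_i$, one has $\langle S_i\eta_i^k-S_i\eta_i,\mu_i\rangle=a_i(F_i\eta_i^k,R_i\mu_i)-a_i(F_i\eta_i,R_i\mu_i)$, so it suffices to pass to the limit inside the $a_i$-terms. By \cref{lemma:Fconv}, strong convergence $\eta_i^k\to\eta_i$ in $\Lambda_i$ yields $F_i\eta_i^k\to F_i\eta_i$ in $V_i$; in particular $\nabla F_i\eta_i^k\to\nabla F_i\eta_i$ in $L^p(\Omega_i)^d$ and $F_i\eta_i^k\to F_i\eta_i$ in $L^r(\Omega_i)$.

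The central step is to show that $\alpha(\nabla F_i\eta_i^k)\to\alpha(\nabla F_i\eta_i)$ in $L^{p/(p-1)}(\Omega_i)^d$ and $g(F_i\eta_i^k)\to g(F_i\eta_i)$ in $L^{r/(r-1)}(\Omega_i)$. I would argue by subsequences: given an arbitrary subsequence, extract a further subsequence along which the $L^p$- and $L^r$-convergence holds pointwise a.e.\ with integrable majorants. Continuity of $\alpha$ and $g$ then yields a.e.\ convergence of the compositions, while the growth bounds $|\alpha(z)|\leq C_1|z|^{p-1}$ and $|g(x)|\leq C_2|x|^{r-1}$ from \cref{ass:pstruct}, together with the $L^1$-convergence of $|\nabla F_i\eta_i^k|^p$ and $|F_i\eta_i^k|^r$, give equi-integrability of $|\alpha(\nabla F_i\eta_i^k)|^{p/(p-1)}$ and $|g(F_i\eta_i^k)|^{r/(r-1)}$. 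Vitali's convergence theorem then delivers the claimed convergence along the subsequence, and since the limit is independent of the chosen subsequence the full sequence converges.

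Once this is in place, H\"older's inequality with the conjugate exponents $(p,p/(p-1))$ and $(r,r/(r-1))$ bounds $|\langle S_i\eta_i^k-S_i\eta_i,\mu_i\rangle|$ by
\[
\|\alpha(\nabla F_i\eta_i^k)-\alpha(\nabla F_i\eta_i)\|_{L^{p/(p-1)}(\Omega_i)^d}\|\nabla R_i\mu_i\|_{L^p(\Omega_i)^d}+\|g(F_i\eta_i^k)-g(F_i\eta_i)\|_{L^{r/(r-1)}(\Omega_i)}\|R_i\mu_i\|_{L^r(\Omega_i)},
\]
which tends to zero. For the joint operator $S$, convergence $\eta^k\to\eta$ in $\Lambda$ entails convergence in both $\Lambda_1$ and $\Lambda_2$, so the conclusion follows by summing the two bounds.

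The main obstacle is the Nemytskii continuity in the second paragraph, together with identifying the correct pairing of exponents. This is precisely where the structure imposed in \cref{ass:pstruct}, namely the matching of the growth exponents of $\alpha$ and $g$ with the H\"older duals of $L^p$ and $L^r$, does the real work; the rest is bookkeeping built on \cref{lemma:Fconv} and \cref{lemma:aiprop}.
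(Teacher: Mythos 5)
Your proof is correct and follows essentially the same route as the paper: reduce demicontinuity of $S_i$ to continuity of the Nemytskii operators induced by $\alpha$ and $g$, feed in the strong convergence $F_i\eta_i^k\to F_i\eta_i$ in $V_i$ from \cref{lemma:Fconv}, and close with H\"older and the boundedness of $R_i$. The only difference is that the paper simply cites the standard continuity theorem for Nemytskii operators (Zeidler, Prop.\ 26.6), whereas you reprove it via the subsequence/a.e.-convergence-with-majorant plus Vitali (or dominated convergence) argument; that argument is valid and is in fact the standard proof of the cited result, so this is a presentational rather than a substantive divergence.
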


\begin{proof}
Assume that $\eta_i^k\to\eta_i$ in $\Lambda_i$. \cref{lemma:Fconv} then implies that $\nabla F_i\eta^k\to\nabla F_i\eta$ in $L^p(\Omega_i)^d$ and $F_i\eta^k\to F_i\eta$ in $L^r(\Omega_i)$. By the assumed continuity and boundedness of the functions $\alpha:z\mapsto\bigl(\alpha_1(z),\ldots,\alpha_d(z)\bigr)$ and $g$, we also have that the corresponding Nemyckii operators $\alpha_\ell:L^p(\Omega_i)^d\to L^{\nicefrac{p}{(p-1)}}(\Omega_i)$ and $g:L^r(\Omega_i)\to L^{\nicefrac{r}{(r-1)}}(\Omega_i)$ are continuous~\cite[Proposition~26.6]{zeidler}. The demicontinuity of $S_i$ then holds, as
\begin{align*}
|\langle S_i\eta^k_i-S_i\eta_i,\mu_i\rangle| 
&\leq\bigl(\,\sum_{\ell=1}^d  \|\alpha_\ell(\nabla F_i\eta_i^k)-\alpha_\ell(\nabla F_i\eta_i)\|_{L^{\nicefrac{p}{(p-1)}}(\Omega_i)}\\
&\quad+\|g(F_i\eta_i^k)-g(F_i\eta_i)\|_{ L^{\nicefrac{r}{(r-1)}}(\Omega_i)}\bigr)\|R_i\mu_i\|_{V_i},
\end{align*}
for every $\mu_i\in\Lambda_i$. The demicontinuity of $S_i$ directly implies the same property for $S$, as a convergent sequence $\{\eta^k\}$ in $\Lambda$ is also convergent in $\Lambda_1$ and $\Lambda_2.$
\end{proof}

\begin{theorem}\label{thm:Sinv}
If the \cref{ass:pstruct,ass:Gamma} hold, then the nonlinear Steklov--Poincar\'e operators $S_i: \Lambda_i\rightarrow \Lambda_i^*$ and $S:\Lambda\rightarrow\Lambda^*$ are bijective. 
\end{theorem}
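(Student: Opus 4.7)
The plan is to recognize the statement as a direct application of the classical Browder--Minty theorem on monotone operators acting between a reflexive Banach space and its dual. All of the required hypotheses are now in hand: the spaces $\Lambda_i$ and $\Lambda$ are reflexive Banach spaces by \cref{lem:LionsMagenes}, while strict monotonicity, coercivity, and demicontinuity of both $S_i$ and $S$ have been established in \cref{lemma:monotonicity,lemma:coercivity,lem:demicont}, respectively. Surjectivity then follows from the standard main theorem on monotone operators (e.g.,~\cite[Theorem~26.A]{zeidler}), applied separately to $S_i: \Lambda_i \to \Lambda_i^*$ and to $S: \Lambda \to \Lambda^*$.

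For injectivity, I would exploit the precise form of the strict monotonicity bound in \cref{lemma:monotonicity}, which is phrased in terms of $F_i\eta - F_i\mu$ rather than $\eta - \mu$, together with the identity $T_iF_i = \mathrm{id}$ on $\Lambda_i$. Concretely, if $S_i\eta = S_i\mu$, then $\langle S_i\eta - S_i\mu, \eta-\mu\rangle = 0$, so \cref{lemma:monotonicity} forces $\nabla(F_i\eta - F_i\mu) = 0$ in $L^p(\Omega_i)^d$ and $F_i\eta = F_i\mu$ in $L^r(\Omega_i)$; hence $F_i\eta = F_i\mu$ in $V_i$, and applying $T_i$ yields $\eta = T_iF_i\eta = T_iF_i\mu = \mu$. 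The argument for $S$ is analogous: if $S\eta = S\mu$, the summed bound yields $F_i\eta = F_i\mu$ for both $i = 1, 2$, and either application of $T_iF_i = \mathrm{id}$ recovers $\eta = \mu$.

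The main genuine obstacle in proving bijectivity has in fact already been dispatched by the preparatory work of \cref{sec:SP}, namely the translation of the $p$-structure on $\Omega_i$ into monotonicity, coercivity, and demicontinuity of the interface operators $S_i$ and $S$ via the Nemyckii framework. With those interface-level properties in place, the theorem reduces, without further complication, to a one-shot invocation of Browder--Minty for surjectivity plus a short computation using $T_iF_i = \mathrm{id}$ for injectivity.
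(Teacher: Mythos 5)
Your proposal is correct and follows essentially the same route as the paper: reflexivity of $\Lambda_i,\Lambda$ plus the monotonicity, coercivity, and demicontinuity lemmas feed a single invocation of the Browder--Minty theorem. The only difference is that you spell out the injectivity step (converting the bound on $F_i\eta-F_i\mu$ into $\eta=\mu$ via $T_iF_i=\mathrm{id}$), a step the paper folds into its earlier claim of strict monotonicity in \cref{lemma:monotonicity} and its parenthetical reference to the relevant parts of Zeidler's theorem.
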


\begin{proof}
The spaces $\Lambda_i$ and $\Lambda$ are real, reflexive Banach spaces and, by \cref{lemma:monotonicity,lemma:coercivity,lem:demicont}, the operators $S_i: \Lambda_i\rightarrow \Lambda_i^*$ and $S:\Lambda\rightarrow\Lambda^*$ are all strictly monotone, coercive and demicontinuous. With these properties, the Browder--Minty theorem; see, e.g.,~\cite[Theorem~26.A(a,c,f)]{zeidler}, implies that the operators are bijective. 
\end{proof}
The next corollary follows by the same argumentation as for the bijectivity of $S_i$.

\begin{corollary}\label{cor:biject}
If the \cref{ass:pstruct,ass:Gamma} hold, then the operators $sJ+S_i:\Lambda_i\to\Lambda_i^*$ are bijective, for every $s>0$.
\end{corollary}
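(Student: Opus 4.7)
The plan is to verify the same three hypotheses as in the proof of \cref{thm:Sinv} and then invoke the Browder--Minty theorem on the reflexive Banach space $\Lambda_i$ (which is reflexive by \cref{lem:LionsMagenes}). Namely, I would show that $sJ+S_i:\Lambda_i\to\Lambda_i^*$ is strictly monotone, coercive, and demicontinuous, and add these three properties to the corresponding ones for $S_i$.

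First, I would use the Gelfand triplet $\Lambda_i \hookrightarrow L^2(\Gamma) \cong L^2(\Gamma)^* \hookrightarrow \Lambda_i^*$ to read off that
\begin{displaymath}
\langle J(\eta-\mu),\eta-\mu\rangle = \|\eta-\mu\|_{L^2(\Gamma)}^2 \geq 0
\quad\text{for all }\eta,\mu\in\Lambda_i,
\end{displaymath}
so $J$ is monotone and $sJ$ is monotone for $s>0$. Adding the strict monotonicity bound of $S_i$ from \cref{lemma:monotonicity} immediately gives strict monotonicity of $sJ+S_i$. Coercivity follows from the same triplet identity $\langle sJ\eta,\eta\rangle = s\|\eta\|_{L^2(\Gamma)}^2\geq 0$, which yields
\begin{displaymath}
\frac{\langle (sJ+S_i)\eta,\eta\rangle}{\|\eta\|_{\Lambda_i}}
\geq \frac{\langle S_i\eta,\eta\rangle}{\|\eta\|_{\Lambda_i}}\to\infty \quad\text{as } \|\eta\|_{\Lambda_i}\to\infty,
\end{displaymath}
by \cref{lemma:coercivity}.

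For demicontinuity of $J:\Lambda_i\to\Lambda_i^*$, I would use the continuity of the embedding $\Lambda_i\hookrightarrow L^2(\Gamma)$: if $\eta^k\to\eta$ in $\Lambda_i$, then $\eta^k\to\eta$ in $L^2(\Gamma)$, and for any $\mu\in\Lambda_i$,
\begin{displaymath}
|\langle J\eta^k - J\eta,\mu\rangle| = |(\eta^k-\eta,\mu)_{L^2(\Gamma)}|
\leq \|\eta^k-\eta\|_{L^2(\Gamma)}\|\mu\|_{L^2(\Gamma)}\to 0.
\end{displaymath}
Combining this with the demicontinuity of $S_i$ from \cref{lem:demicont} shows that $sJ+S_i$ is demicontinuous. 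With all three properties in hand, the Browder--Minty theorem \cite[Theorem~26.A(a,c,f)]{zeidler} yields bijectivity of $sJ+S_i:\Lambda_i\to\Lambda_i^*$.

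I do not expect any substantive obstacle, since every ingredient is already established: the role of $J$ is purely to add a monotone, continuous, nonnegative perturbation, and it cannot destroy any of the three properties that already hold for $S_i$. The only thing worth double-checking is that $J$ indeed maps into $\Lambda_i^*$ (rather than merely into $L^2(\Gamma)^*$), which is exactly the content of the dense embedding $L^2(\Gamma)^*\hookrightarrow\Lambda_i^*$ recorded after \cref{lem:LionsMagenes}.
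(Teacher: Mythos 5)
Your proof is correct and follows exactly the route the paper intends: the paper's entire justification for \cref{cor:biject} is the remark that it ``follows by the same argumentation as for the bijectivity of $S_i$,'' i.e., verifying strict monotonicity, coercivity, and demicontinuity of $sJ+S_i$ and invoking Browder--Minty, which is precisely what you do. Your observations that $sJ$ is a monotone, nonnegative, demicontinuous perturbation (via the Gelfand triplet and the continuous embedding $\Lambda_i\hookrightarrow L^2(\Gamma)$) correctly fill in the details the paper leaves implicit.
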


\section{Existence and convergence of the Robin--Robin method}\label{sec:conv}

The weak form of the Peaceman--Rachford splitting~\cref{eq:PR} seems to be too general for a convergence analysis. To remedy this, we will restrict the domains of the operators $S_{i}, S$ such that the Steklov--Poincar\'e equation~\cref{eq:speq} and the Peaceman--Rachford splitting can be interpreted on $L^{2}(\Gamma)$ instead of on the dual spaces $\Lambda_i^*, \Lambda^*$. This comes at the cost of requiring more regularity of the weak solution and of the initial guess $\eta^0_2$. 

More precisely, we define the operators $\mathcal{S}_i:\dom(\mathcal{S}_i)\subseteq L^{2}(\Gamma)\to L^{2}(\Gamma)$ as
\begin{displaymath}
\dom(\mathcal{S}_i)=\{\mu\in\Lambda_i: S_i\mu\in L^2(\Gamma)^*\}\quad\text{and}\quad 
\mathcal{S}_i\mu=J^{-1}S_i\mu\text{ for }\mu\in \dom(\mathcal{S}_i).
\end{displaymath}
Analogously, we introduce $\mathcal{S}:\dom(\mathcal{S})\subseteq L^{2}(\Gamma)\to L^{2}(\Gamma)$ given by
\begin{displaymath}
\dom(\mathcal{S})=\{\mu\in\Lambda: S\mu\in L^2(\Gamma)^*\}\quad\text{and}\quad 
\mathcal{S}\mu=J^{-1}S\mu\text{ for }\mu\in \dom(\mathcal{S}).
\end{displaymath}
As the zero functional obviously is an element in $L^2(\Gamma)^*$, the unique solution $\eta\in\Lambda$ of the Steklov--Poincar\'e equation is in $\dom(\mathcal{S})$ and 
\begin{displaymath}
\mathcal{S}\eta=0.
\end{displaymath}

\begin{remark}
By the above construction, one obtains that $\dom(\mathcal{S}_1)\cap \dom(\mathcal{S}_2)\subseteq \dom(\mathcal{S})$ and 
\begin{displaymath}
\mathcal{S}\mu=\mathcal{S}_1\mu+\mathcal{S}_2\mu,\quad\text{ for all }\mu\in \dom(\mathcal{S}_1)\cap \dom(\mathcal{S}_2).
 \end{displaymath}
However, the definition of the domains do not ensure that $\dom(\mathcal{S})$ is equal to $\dom(\mathcal{S}_1)\cap \dom(\mathcal{S}_2)$, as $(S_{1}+S_{2})\mu\in L^2(\Gamma)^*$ does not necessarily imply that $S_{i}\mu\in L^2(\Gamma)^*$. 
\end{remark}

If the weak solution of the nonlinear elliptic equation~\cref{eq:weak} satisfies the additional regularity property stated in \cref{ass:regularity}, then the corresponding solution of $\mathcal{S}\eta=0$ is in fact an element in $\dom(\mathcal{S}_1)\cap \dom(\mathcal{S}_2)$. This propagation of regularity will be crucial when proving convergence of the Peaceman--Rachford splitting. 

\begin{lemma}\label{lemma:etareg}
If the \cref{ass:pstruct,ass:regularity,ass:Gamma} hold and $\mathcal{S}\eta=0$, then
$\eta\in \dom(\mathcal{S}_1)\cap \dom(\mathcal{S}_2)$.
\end{lemma}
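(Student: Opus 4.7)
The plan is to show that for every $\mu\in\Lambda_i$ the functional $S_i\eta$ admits the representation
\begin{equation*}
\langle S_i\eta,\mu\rangle=\int_\Gamma\bigl(\alpha(\nabla u_i)\cdot\nu_i\bigr)\mu\,\mathrm{d}S,\qquad u_i:=F_i\eta.
\end{equation*}
Since $\alpha(\nabla u_i)\in C(\overline{\Omega}_i)^d$ by \cref{ass:regularity} and $\nu_i\in L^\infty(\partial\Omega_i)^d$, the factor $(\alpha(\nabla u_i)\cdot\nu_i)|_\Gamma$ lies in $L^\infty(\Gamma)\subseteq L^2(\Gamma)$, so this representation identifies $S_i\eta$ with an element of $L^2(\Gamma)^*$, yielding $\eta\in\dom(\mathcal{S}_i)$ for both $i=1,2$.

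To invoke \cref{ass:regularity} one first identifies $F_i\eta$ with the restriction of the global weak solution. Since $\mathcal{S}\eta=0$ forces $S\eta=0\in\Lambda^*$, the element $\eta\in\Lambda$ solves the Steklov--Poincar\'e equation~\cref{eq:speq}; the interface reformulation in \cref{sec:interface} then gives that $(F_1\eta,F_2\eta)$ solves the transmission problem~\cref{eq:tran}, and \cref{thm:TPequivWE} together with uniqueness of the weak solution of~\cref{eq:weak} forces $F_i\eta=u|_{\Omega_i}$, with $u$ satisfying \cref{ass:regularity}.

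The heart of the argument is a generalized Green's formula. Formally, the distributional identity $\dive\alpha(\nabla u_i)=g(u_i)-f_i$ inherited from the weak form gives, after integration by parts,
\begin{equation*}
a_i(F_i\eta,\varphi)-(f_i,\varphi)_{L^2(\Omega_i)}=\int_{\partial\Omega_i}\bigl(\alpha(\nabla u_i)\cdot\nu_i\bigr)T_{\partial\Omega_i}\varphi\,\mathrm{d}S\quad\text{for }\varphi\in V_i.
\end{equation*}
Under the mere continuity of $\alpha(\nabla u_i)$, I would justify this by mollifying an extension of $\alpha(\nabla u_i)$ to obtain $\alpha_\varepsilon\in C^\infty(\overline{\Omega}_i)^d$ with $\alpha_\varepsilon\to\alpha(\nabla u_i)$ uniformly and $\dive\alpha_\varepsilon\to g(u_i)-f_i$ in a suitable $L^q(\Omega_i)$, apply the classical divergence theorem to $\alpha_\varepsilon$ against a smooth test function $\varphi\in C^\infty(\overline{\Omega}_i)$, and let $\varepsilon\to 0$. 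The identity then extends from $\varphi\in C^\infty(\overline{\Omega}_i)$ to $\varphi=R_i\mu\in V_i$ by density of smooth functions in $W^{1,p}(\Omega_i)$, using boundedness of $a_i$, continuity of $T_{\partial\Omega_i}$, and the $L^\infty$-bound on the boundary factor.

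Because $T_{\partial\Omega_i}(R_i\mu)=E_i\mu$ vanishes on $\partial\Omega_i\setminus\Gamma$ and equals $\mu$ on $\Gamma$, setting $\varphi=R_i\mu$ collapses the boundary integral to the target expression and completes the proof. The main obstacle is precisely the justification of Green's formula under the low regularity available for $\alpha(\nabla u_i)$; as an alternative to the mollification route, one can invoke a Gauss--Green theorem for vector fields in $L^\infty(\Omega_i)^d$ whose distributional divergence lies in an $L^q(\Omega_i)$ on Lipschitz domains, in the same spirit as the $W^{1,p}$-Green's formula from \cite[Section 3.1.2]{necas} already used in \cref{lemma:L2}.
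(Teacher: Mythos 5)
Your overall plan is the same as the paper's: identify $F_i\eta$ with $u|_{\Omega_i}$ via the chain of equivalences, use \cref{ass:regularity} to get a well-defined normal flux $\alpha(\nabla u_i)\cdot\nu_i\in L^\infty(\Gamma)$, and exhibit $S_i\eta$ as the pairing against this flux via a Green's formula. The structure and the final representation are exactly right.

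Where you diverge is the justification of Green's formula, and there your primary suggestion (mollify a Tietze extension of $\alpha(\nabla u_i)$ and pass to the limit) has a real gap: a continuous extension of $\alpha(\nabla u_i)$ to a neighbourhood of $\overline{\Omega}_i$ gives no control on its distributional divergence across $\partial\Omega_i$, so there is no reason $\dive\alpha_\varepsilon\to g(u_i)-f_i$ in any $L^q(\Omega_i)$. Uniform $C^0$-approximation simply does not control the divergence; you would be re-proving a density theorem without the extension machinery (reflection or localization in charts) that actually preserves the divergence. The clean observation, which the paper uses, is that you already sit in the standard functional framework: on the bounded Lipschitz domain $\Omega_i$, $\alpha(\nabla u_i)\in C(\overline{\Omega}_i)^d\subset L^2(\Omega_i)^d$ and, after checking from the growth condition on $g$ and the restriction on $r$ that $g(u_i)-f_i\in L^2(\Omega_i)$ (a step you gloss over), its distributional divergence is in $L^2(\Omega_i)$. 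Hence $\alpha(\nabla u_i)\in H(\mathrm{div},\Omega_i)$ and the $H(\mathrm{div})$-Green's formula of \cite[Chapter 1, Corollary 2.1]{raviart} applies directly for test functions in $H^1(\Omega_i)\supseteq W^{1,p}(\Omega_i)=V_i\cup\{R_i\mu\}$. The $C(\overline{\Omega}_i)^d$-regularity is then used only to identify the abstract $H^{-1/2}(\partial\Omega_i)$ normal trace with the pointwise product $\alpha(\nabla u_i)\cdot\nu_i\in L^\infty(\partial\Omega_i)$, giving the $L^2(\Gamma)$-representation you want. Your fall-back proposal — invoking a Gauss--Green theorem for bounded vector fields with $L^q$ divergence — does work, but it is heavier machinery than needed; the $H(\mathrm{div})$ result already covers the present situation with no extra hypotheses.
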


\begin{proof}
As $u=\{F_1\eta\text{ on }\Omega_1;  F_2\eta\text{ on }\Omega_2\}$ is the weak solution of~\cref{eq:weak}, we have that 
\begin{displaymath}
\int_\Omega \alpha(\nabla u)\cdot\nabla v\,\mathrm{d}x= -\int_\Omega \bigl(g(u)-f\bigr)v\,\mathrm{d}x,\quad\text{for all }v\in C^\infty_{0}(\Omega). 
\end{displaymath}
The restrictions on $r$ in \cref{ass:pstruct} yield that $u\in W^{1,p}(\Omega)\hookrightarrow L^{2(r-1)}(\Omega)$. This together with the observation $|g(u)|^2\leq C |u|^{2(r-1)}$ implies that $g(u)-f\in L^2(\Omega)$, i.e., the distributional divergence of $\alpha(\nabla u)$ is in $ L^2(\Omega)^d$. By \cref{ass:regularity} and restricting to $\Omega_i$, we arrive at
\begin{gather*}
\alpha(\nabla F_i\eta)\in H(\mathrm{div},\Omega_i)\cap C(\overline{\Omega}_i)^d,\quad 
\alpha(\nabla F_i\eta)\cdot\nu_i\in L^\infty(\partial\Omega_i)\quad\text{and}\\
\quad\nabla\cdot\alpha(\nabla F_i\eta)=g(F_i\eta)-f_i\in L^2(\Omega_i).
\end{gather*}
The $H(\mathrm{div},\Omega_i)$-version of Green's formula~\cite[Chapter 1, Corollary 2.1]{raviart} then gives us
\begin{displaymath}
\int_{\Omega_i} \alpha(\nabla F_i\eta)\cdot\nabla v\,\mathrm{d}x =
						 -\int_{\Omega_i} \nabla\cdot\alpha(\nabla F_i\eta) v\,\mathrm{d}x
						 + \int_{\partial\Omega_i}\alpha(\nabla F_i\eta)\cdot\nu_i\, T_{\partial\Omega_i}v\,\mathrm{d}S,
\end{displaymath}
for all $v\in H^1(\Omega_i)$. Hence, 
\begin{align*}
\langle S_i\eta, \mu\rangle &= \int_{\Omega_i} \alpha(\nabla F_i\eta)\cdot\nabla R_i\mu\,\mathrm{d}x 
				         						+ \int_{\Omega_i} \bigl(g(F_i\eta)-f_i\bigr)R_i\mu\,\mathrm{d}x\\
					& = \int_{\partial\Omega_i}\alpha(\nabla F_i\eta)\cdot\nu_i\, T_{\partial\Omega_i}R_i\mu\,\mathrm{d}S
					   = \bigl(\alpha(\nabla F_i\eta)\cdot\nu_i,\mu\bigr)_{L^2(\Gamma)},\quad\text{for all }\mu\in\Lambda_i,
\end{align*}
which implies that $\eta\in \dom(\mathcal{S}_i)$, for $i=1,2$. 
\end{proof}

\begin{remark}\label{rem:reg}
From the proof it is clear that the regularity assumption $\alpha(\nabla u)\in C(\overline{\Omega})^d$ is stricter than necessary, and could be replaced by assuming that the normal component of $\alpha(\nabla u)$ on $\Gamma$ can be interpreted as an element in $L^2(\Gamma)^*$. However, characterizing the spatial regularity of $u$ required to satisfy this weaker assumption demands a more elaborate trace theory than the one considered in \cref{sec:functionspaces}.
\end{remark}

\begin{lemma}\label{lemma:Smonbij}
If the \cref{ass:pstruct,ass:Gamma} hold, then the operators $\mathcal{S}_{i}$ are monotone, i.e., 
\begin{displaymath}
(\mathcal{S}_{i}\eta-\mathcal{S}_{i}\mu,\eta-\mu)_{L^2(\Gamma)}\geq 0,\quad\text{for all }\eta,\mu\in \dom(\mathcal{S}_i),
\end{displaymath}
and the operators $sI+\mathcal{S}_i:\dom(\mathcal{S}_i)\to L^{2}(\Gamma)$ are bijective for any $s>0$.
\end{lemma}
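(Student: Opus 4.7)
The plan is to transfer the properties of $S_i:\Lambda_i\to\Lambda_i^*$ (established in \cref{lemma:monotonicity} and \cref{cor:biject}) down to $\mathcal{S}_i$ on $L^2(\Gamma)$ via the Riesz isomorphism $J$ and the Gelfand triplet $\Lambda_i\hookrightarrow L^2(\Gamma)\cong L^2(\Gamma)^*\hookrightarrow\Lambda_i^*$. The key identification is that, by the definition $\mathcal{S}_i=J^{-1}S_i$ on $\dom(\mathcal{S}_i)$ together with the relation $\langle J\zeta,\nu\rangle=(\zeta,\nu)_{L^2(\Gamma)}$ for $\zeta\in L^2(\Gamma)$ and $\nu\in\Lambda_i$, one has
\begin{equation*}
(\mathcal{S}_i\mu,\nu)_{L^2(\Gamma)}=\langle S_i\mu,\nu\rangle,\quad\text{for all }\mu\in\dom(\mathcal{S}_i),\,\nu\in\Lambda_i.
\end{equation*}

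For the monotonicity assertion, I would take $\eta,\mu\in\dom(\mathcal{S}_i)\subseteq\Lambda_i$ and apply the above identity with $\nu=\eta-\mu\in\Lambda_i$. The strict monotonicity of $S_i$ from \cref{lemma:monotonicity} then immediately yields
\begin{equation*}
(\mathcal{S}_i\eta-\mathcal{S}_i\mu,\eta-\mu)_{L^2(\Gamma)}=\langle S_i\eta-S_i\mu,\eta-\mu\rangle\geq 0.
\end{equation*}

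For the bijectivity of $sI+\mathcal{S}_i$, given $\zeta\in L^2(\Gamma)$, I would recast the $L^2(\Gamma)$-equation $s\mu+\mathcal{S}_i\mu=\zeta$ as the equation $(sJ+S_i)\mu=J\zeta$ in $\Lambda_i^*$, noting that $J\zeta\in L^2(\Gamma)^*\hookrightarrow\Lambda_i^*$. By \cref{cor:biject}, there exists a unique $\mu\in\Lambda_i$ satisfying this latter equation. The equation itself then forces $S_i\mu=J\zeta-sJ\mu=J(\zeta-s\mu)\in L^2(\Gamma)^*$, so that $\mu\in\dom(\mathcal{S}_i)$ and $\mathcal{S}_i\mu=\zeta-s\mu$, proving surjectivity. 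Injectivity is immediate: if $s\mu_1+\mathcal{S}_i\mu_1=s\mu_2+\mathcal{S}_i\mu_2$ in $L^2(\Gamma)$, applying $J$ gives $(sJ+S_i)\mu_1=(sJ+S_i)\mu_2$ in $\Lambda_i^*$, and \cref{cor:biject} yields $\mu_1=\mu_2$.

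I do not anticipate a serious obstacle here; the argument is essentially a bookkeeping exercise consisting of carefully tracking the identifications between $L^2(\Gamma)$, $L^2(\Gamma)^*$, and $\Lambda_i^*$ via $J$ and the Gelfand triplet. The only subtle point is ensuring that the equivalence between the equations on $L^2(\Gamma)$ and $\Lambda_i^*$ correctly places the solution in $\dom(\mathcal{S}_i)$ — but this is handled automatically since $S_i\mu$ ends up being expressed as $J$ of an $L^2(\Gamma)$-element.
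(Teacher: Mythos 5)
Your proposal is correct and follows essentially the same route as the paper: transfer monotonicity of $S_i$ via the identity $(\mathcal{S}_i\cdot,\cdot)_{L^2(\Gamma)}=\langle S_i\cdot,\cdot\rangle$, and obtain bijectivity of $sI+\mathcal{S}_i$ by recasting the $L^2(\Gamma)$-equation in $\Lambda_i^*$ and invoking \cref{cor:biject}. The only cosmetic difference is that you spell out injectivity as a separate step, whereas the paper absorbs it into the uniqueness assertion of \cref{cor:biject}.
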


\begin{proof}
The monotonicity follows by \cref{lemma:monotonicity}, as 
\begin{align*}
(\mathcal{S}_{i}\eta-\mathcal{S}_{i}\mu,\eta-\mu)_{L^2(\Gamma)} &= (J^{-1}S_{i}\eta-J^{-1}S_{i}\mu,\eta-\mu)_{L^2(\Gamma)}\\
				& =\langle S_i\eta-S_i\mu, \eta-\mu \rangle \geq 0,\quad\text{for all }\eta,\mu\in \dom(\mathcal{S}_i)\subseteq\Lambda_{i}.
\end{align*}
For a fixed $s>0$ and an arbitrary $\mu\in L^2(\Gamma)$ we have, due to \cref{cor:biject}, that there exists a unique $\eta\in\Lambda_i$ such that $(sJ+S_{i})\eta=J\mu$ in $\Lambda_{i}^{*}$, i.e., 
\begin{displaymath}
S_{i}\eta=J(\mu-s\eta)\in L^2(\Gamma)^*. 
\end{displaymath}
Hence, $\eta\in \dom(\mathcal{S}_{i})$ and $(sI +\mathcal{S}_{i})\eta=\mu$ in $L^2(\Gamma)$. The operators $sI+\mathcal{S}_i:\dom(\mathcal{S}_i)\to L^{2}(\Gamma)$ are therefore bijective. 
\end{proof}

The Peaceman--Rachford splitting on $L^2(\Gamma)$ is now given by finding $(\eta^{n}_{1}, \eta^{n}_{2})\in \dom(\mathcal{S}_1)\times \dom(\mathcal{S}_2)$, for $n=1,2,\ldots,$ such that 
\begin{equation}\label{eq:PRL2}
\left\{\begin{aligned}
 	(sI+\mathcal{S}_1)\eta_1^{n+1}&=(sI-\mathcal{S}_2)\eta_2^n, \\
	(sI+\mathcal{S}_2)\eta_2^{n+1}&=(sI-\mathcal{S}_1)\eta_1^{n+1},
\end{aligned}\right.
\end{equation}
where $\eta_2^0\in \dom(\mathcal{S}_2)$ is an initial guess. \cref{lemma:Smonbij} then directly yields the existence of the approximation. 

\begin{corollary}\label{cor:PRL2exist}
If the \cref{ass:pstruct,ass:Gamma} hold and  $\eta_2^0\in \dom(\mathcal{S}_2)$, then there exists a unique Peaceman--Rachford approximation $(\eta^{n}_{1}, \eta^{n}_{2})_{n\geq 1}\subset \dom(\mathcal{S}_1)\times \dom(\mathcal{S}_2)$ given by~\cref{eq:PRL2} in $L^2(\Gamma)$.
\end{corollary}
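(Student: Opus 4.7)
The plan is to prove Corollary \ref{cor:PRL2exist} by a straightforward induction on $n$, where the inductive step is handled by a single invocation of \cref{lemma:Smonbij}.

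First, I would set up the induction on the assertion $P(n)$: ``there exist unique $\eta_1^n\in\dom(\mathcal{S}_1)$ and $\eta_2^n\in\dom(\mathcal{S}_2)$ satisfying the two lines of~\cref{eq:PRL2} with $\eta_2^{n-1}$ given.'' The hypothesis of the corollary is precisely that $\eta_2^0\in\dom(\mathcal{S}_2)$ is given, which anchors the induction.

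For the inductive step, assume that $\eta_2^n\in\dom(\mathcal{S}_2)$ has been constructed. Since $\mathcal{S}_2\colon\dom(\mathcal{S}_2)\to L^2(\Gamma)$, the right-hand side
\begin{displaymath}
    (sI-\mathcal{S}_2)\eta_2^n\in L^2(\Gamma).
\end{displaymath}
By \cref{lemma:Smonbij}, the operator $sI+\mathcal{S}_1\colon\dom(\mathcal{S}_1)\to L^2(\Gamma)$ is bijective, hence there exists a unique $\eta_1^{n+1}\in\dom(\mathcal{S}_1)$ solving the first line of~\cref{eq:PRL2}. Now $(sI-\mathcal{S}_1)\eta_1^{n+1}\in L^2(\Gamma)$, and a second application of \cref{lemma:Smonbij} (with the roles of $1$ and $2$ interchanged) produces a unique $\eta_2^{n+1}\in\dom(\mathcal{S}_2)$ satisfying the second line. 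This closes the induction and yields both existence and uniqueness of the sequence $(\eta_1^n,\eta_2^n)_{n\geq 1}\subset\dom(\mathcal{S}_1)\times\dom(\mathcal{S}_2)$.

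There is no real obstacle here: all the analytic work has already been absorbed into \cref{lemma:Smonbij}, which in turn leans on the Browder--Minty bijectivity of $sJ+S_i$ from \cref{cor:biject}. The only thing to double-check is that the bijectivity statement of \cref{lemma:Smonbij} is strong enough, i.e., that the range of $sI+\mathcal{S}_i$ is all of $L^2(\Gamma)$ (not merely a dense subset), so that the right-hand sides $(sI-\mathcal{S}_j)\eta_j^n\in L^2(\Gamma)$ are guaranteed to lie in the range. The proof of \cref{lemma:Smonbij} establishes exactly this surjectivity via $S_i\eta=J(\mu-s\eta)$, so the induction proceeds without further assumptions.
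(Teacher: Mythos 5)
Your proof is correct and is exactly the argument the paper intends: the paper gives no written proof, stating only that \cref{lemma:Smonbij} ``directly yields the existence,'' and the induction you spell out --- alternating applications of the bijectivity of $sI+\mathcal{S}_1$ and $sI+\mathcal{S}_2$ on $L^2(\Gamma)$, anchored at $\eta_2^0\in\dom(\mathcal{S}_2)$ --- is precisely how that lemma is meant to be applied. Your closing check that the range of $sI+\mathcal{S}_i$ is all of $L^2(\Gamma)$ is the right point to verify, and it is indeed what \cref{lemma:Smonbij} provides.
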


\begin{corollary}\label{cor:PRisRobin} 
Let the \cref{ass:pstruct,ass:Gamma} hold, $\eta_2^0\in \dom(\mathcal{S}_2)$ and set $u^0_2=F_2\eta_2^0$. The Peaceman--Rachford approximation $(\eta^{n}_{1}, \eta^{n}_{2})_{n\geq 1}\subset \dom(\mathcal{S}_1)\times \dom(\mathcal{S}_2)$ also satisfies the weak formulation~\cref{eq:PR}, and $(u^{n}_1, u^{n}_2)_{n\geq 1}=(F_1\eta^{n}_{1}, F_2\eta^{n}_{2})_{n\geq 1}$ is a weak Robin--Robin approximation~\cref{eq:Robin}.
\end{corollary}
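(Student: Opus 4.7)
The plan is to pass from the $L^2(\Gamma)$-identity \cref{eq:PRL2} to the dual-space weak identity \cref{eq:PR}, and then to invoke the converse direction of \cref{lem:RobinPRequiv} to conclude. Existence and uniqueness of the iterates $(\eta_1^n,\eta_2^n)_{n\geq 1}$ in $\dom(\mathcal{S}_1)\times\dom(\mathcal{S}_2)$ are already supplied by \cref{cor:PRL2exist}, so the only remaining task is to transfer the identity to the correct space of test functions.

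First I would use that on $\dom(\mathcal{S}_i)$ the defining relation $\mathcal{S}_i\mu=J^{-1}S_i\mu$ rearranges to $S_i\mu=J\mathcal{S}_i\mu$. Applying the Riesz isomorphism $J$ to both equations in \cref{eq:PRL2} and using its linearity therefore produces
\begin{displaymath}
(sJ+S_1)\eta_1^{n+1}=(sJ-S_2)\eta_2^n\quad\text{and}\quad (sJ+S_2)\eta_2^{n+1}=(sJ-S_1)\eta_1^{n+1}
\end{displaymath}
as equalities in $L^2(\Gamma)^*$. The Gelfand triplet $\Lambda\hookrightarrow L^2(\Gamma)\cong L^2(\Gamma)^*\hookrightarrow\Lambda^*$ established earlier then allows each side to be paired with any $\mu\in\Lambda\subset\Lambda_1\cap\Lambda_2$ through the consistent dual pairing (recall $\langle J\eta,\mu\rangle=(\eta,\mu)_{L^2(\Gamma)}$), and this pairing reproduces exactly the weak Peaceman--Rachford system \cref{eq:PR}.

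Having obtained \cref{eq:PR} for $(\eta_1^n,\eta_2^n)_{n\geq 1}$, the converse direction of \cref{lem:RobinPRequiv}, applied with the substitution $u_i^n=F_i\eta_i^n$, directly delivers the weak Robin--Robin approximation \cref{eq:Robin} together with the prescribed initial datum $u_2^0=F_2\eta_2^0$. I do not anticipate a substantial obstacle here; the one bookkeeping point requiring care is the compatibility of the pairings $\langle\cdot,\cdot\rangle_{\Lambda_i^*\times\Lambda_i}$ with $\langle\cdot,\cdot\rangle_{\Lambda^*\times\Lambda}$ when restricted to functionals originating in $L^2(\Gamma)^*$, which follows immediately from the inclusion $\Lambda\subset\Lambda_i$ and the fact that both pairings coincide with the $L^2(\Gamma)$ inner product on such functionals.
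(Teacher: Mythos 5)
Your proof is correct and takes essentially the same route as the paper: pass from the $L^2(\Gamma)$-identities to the dual-space identities via $J$, test against $\mu\in\Lambda\subset\Lambda_1\cap\Lambda_2$ using the Gelfand triplet to recover \cref{eq:PR}, and then apply the converse direction of \cref{lem:RobinPRequiv}. The paper merely spells out the chain of pairing identities $\bigl((sI\pm\mathcal{S}_i)\eta,\mu\bigr)_{L^2(\Gamma)}=\langle(sJ\pm S_i)\eta,\mu\rangle$ explicitly, whereas you phrase the same step via $J$ and the embedding.
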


\begin{proof}
Assume that $(\eta^{n}_{1}, \eta^{n}_{2})_{n\geq 1}\subset \dom(\mathcal{S}_1)\times \dom(\mathcal{S}_2)$ is a Peaceman--Rachford approximation in  $L^2(\Gamma)$. Then,
\begin{align*}
\bigl((sI+\mathcal{S}_1)\eta_1^{n+1},\mu\bigr)_{L^2(\Gamma)}&=\bigl((sI-\mathcal{S}_2)\eta_2^n,\mu\bigr)_{L^2(\Gamma)}, &  &\text{for all }\mu\in L^2(\Gamma),\\
\bigl((sI+\mathcal{S}_1)\eta_1^{n+1},\mu\bigr)_{L^2(\Gamma)}&=\langle(sJ+S_1)\eta_1^{n+1}, \mu\rangle,& &\text{for all }\mu\in\Lambda_1,\quad
\text{and}\\
\bigl((sI-\mathcal{S}_2)\eta_2^n,\mu\bigr)_{L^2(\Gamma)}&=\langle (sJ-S_2)\eta_2^n, \mu\rangle, & &\text{for all }\mu\in\Lambda_2.
\end{align*}
This implies that 
\begin{displaymath}
\langle(sJ+S_1)\eta_1^{n+1}, \mu\rangle=\langle (sJ-S_2)\eta_2^n, \mu\rangle,\quad\text{for all }\mu\in\Lambda=\Lambda_1\cap\Lambda_2,
\end{displaymath}
i.e., the first assertion of~\cref{eq:PR} holds. The same argumentation yields that the second assertion of~\cref{eq:PR} is valid. As $(\eta^{n}_{1}, \eta^{n}_{2})_{n\geq 1}$ satisfies~\cref{eq:PR}, \cref{lem:RobinPRequiv} directly implies that $(u^{n}_1, u^{n}_2)_{n\geq 1}=(F_1\eta^{n}_{1}, F_2\eta^{n}_{2})_{n\geq 1}$ is a weak Robin--Robin approximation~\cref{eq:Robin}.
\end{proof}

\begin{remark}
At a first glance, finding an initial guess satisfying $\eta^0_2\in \dom(\mathcal{S}_2)$ might seem limiting, as the domain is not explicitly given. However, such an initial guess can, e.g., be found by solving $\langle S_2\eta_2^0,\mu\rangle=0$, for all $\mu\in\Lambda_2$.
\end{remark}

With this $L^2(\Gamma)$-framework the key part of the convergence proof follows by the abstract result~\cite[Proposition~1]{lionsmercier}. 
For sake of completeness we state a simplified version of the short proof in the current notation. 

\begin{lemma}\label{lemma:LM}
Consider the solution of $\mathcal{S}\eta=0$ and the Peaceman--Rachford approximation $(\eta^{n}_{1}, \eta^{n}_{2})_{n\geq 1}$. 
If $\eta_2^0\in \dom(\mathcal{S}_2)$ and the \cref{ass:pstruct,ass:regularity,ass:Gamma} hold, then
\begin{equation}\label{eq:Sconv}
(\mathcal{S}_i\eta^n_i-\mathcal{S}_i \eta, \eta^n_i -\eta)_{L^2(\Gamma)}\to 0,\quad\text{as }n\to\infty,
\end{equation}
for $i=1, 2$.
\end{lemma}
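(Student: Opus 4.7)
The plan is to invoke the classical Lions--Mercier argument that recasts the Peaceman--Rachford iteration as a fixed-point iteration driven by the composition of two non-expansive reflection operators on $L^2(\Gamma)$, and then extract the desired limit from the resulting telescoping identity.

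First I would use \cref{lemma:etareg} to lift the solution of $\mathcal{S}\eta=0$ to $\dom(\mathcal{S}_1)\cap\dom(\mathcal{S}_2)$, so that $\mathcal{S}_i\eta\in L^2(\Gamma)$ for $i=1,2$ and $\mathcal{S}_1\eta+\mathcal{S}_2\eta=0$. With \cref{lemma:Smonbij} in hand, the resolvents $J_i^s=(sI+\mathcal{S}_i)^{-1}:L^2(\Gamma)\to\dom(\mathcal{S}_i)$ and the reflections $R_i^s=2sJ_i^s-I$ are well defined on $L^2(\Gamma)$. Expanding the inner product via $\|a+b\|^2-\|a-b\|^2=4(a,b)$ with $a=s(\mu-\tilde\mu)$ and $b=\mathcal{S}_i\mu-\mathcal{S}_i\tilde\mu$ yields, for every $\mu,\tilde\mu\in\dom(\mathcal{S}_i)$ and $\zeta=(sI+\mathcal{S}_i)\mu$, $\tilde\zeta=(sI+\mathcal{S}_i)\tilde\mu$, the identity
\begin{equation*}
\|\zeta-\tilde\zeta\|_{L^2(\Gamma)}^2 - \|R_i^s\zeta-R_i^s\tilde\zeta\|_{L^2(\Gamma)}^2
= 4s(\mathcal{S}_i\mu-\mathcal{S}_i\tilde\mu,\mu-\tilde\mu)_{L^2(\Gamma)}\geq 0,
\end{equation*}
by the monotonicity of $\mathcal{S}_i$; in particular, each $R_i^s$ is non-expansive on $L^2(\Gamma)$.

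Next I would introduce the auxiliary sequence $\phi^n=(sI-\mathcal{S}_2)\eta_2^n$, $n\geq 0$. A direct manipulation of~\cref{eq:PRL2} shows first that $\phi^n=(sI+\mathcal{S}_1)\eta_1^{n+1}$, then that $R_1^s\phi^n=(sI-\mathcal{S}_1)\eta_1^{n+1}=(sI+\mathcal{S}_2)\eta_2^{n+1}$, and finally that $\phi^{n+1}=R_2^sR_1^s\phi^n$. Using $\mathcal{S}_1\eta=-\mathcal{S}_2\eta$, the same computation applied to $\eta$ produces the stationary point $\phi=(sI-\mathcal{S}_2)\eta=(sI+\mathcal{S}_1)\eta$ satisfying $R_2^sR_1^s\phi=\phi$. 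Applying the identity above to the two reflections in succession and adding gives
\begin{equation*}
\|\phi^n-\phi\|_{L^2(\Gamma)}^2-\|\phi^{n+1}-\phi\|_{L^2(\Gamma)}^2
=4s\sum_{i=1}^2(\mathcal{S}_i\eta_i^{n+1}-\mathcal{S}_i\eta,\eta_i^{n+1}-\eta)_{L^2(\Gamma)}.
\end{equation*}

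Since the right-hand side is non-negative by monotonicity, the sequence $\{\|\phi^n-\phi\|_{L^2(\Gamma)}^2\}$ is non-increasing and bounded below, hence convergent; telescoping in $n$ then forces each of the two non-negative summands on the right to vanish as $n\to\infty$, which is exactly~\cref{eq:Sconv}. The main obstacle, and the only place where \cref{ass:regularity} is genuinely used, is the passage to $\eta\in\dom(\mathcal{S}_1)\cap\dom(\mathcal{S}_2)$ that makes $\phi$ meaningful and turns $\mathcal{S}\eta=0$ into the additive identity $\mathcal{S}_1\eta+\mathcal{S}_2\eta=0$; everything after that is bookkeeping around the non-expansivity identity for monotone operators.
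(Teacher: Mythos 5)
Your argument is correct and is essentially the paper's own Lions--Mercier proof, merely repackaged in resolvent/reflection language: your $\phi^n=(sI-\mathcal{S}_2)\eta_2^n$ is the paper's $\lambda^n$, your nonexpansivity identity for $R_i^s$ is the paper's explicit norm-difference computation, and the telescoping step is the same. The minor index shift (your telescope runs on $\|\lambda^n-\lambda\|^2$ while the paper's runs on $\|\mu^n-\mu\|^2$) is immaterial, and in both cases \cref{lemma:etareg} is invoked exactly once to place $\eta$ in $\dom(\mathcal{S}_1)\cap\dom(\mathcal{S}_2)$ so that $\mathcal{S}_1\eta=-\mathcal{S}_2\eta$ holds in $L^2(\Gamma)$.
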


\begin{proof}
By the hypotheses and \cref{lemma:etareg}, we obtain that $\eta\in \dom(\mathcal{S}_1)\cap \dom(\mathcal{S}_2)$ and $\mathcal{S}_1\eta=-\mathcal{S}_2\eta$. Furthermore, 
\begin{displaymath}
\eta^{n+1}_1=(sI+\mathcal{S}_1)^{-1}(sI-\mathcal{S}_2)\eta^n_2\in \dom(\mathcal{S}_1)
\text{ and } 
\eta^{n+1}_2=(sI+\mathcal{S}_2)^{-1}(sI-\mathcal{S}_1)\eta^{n+1}_1\in \dom(\mathcal{S}_2).
\end{displaymath}
Next, we introduce the notation 
\begin{displaymath}
\mu^n=(sI+\mathcal{S}_2)\eta^n_2,\quad \mu=(sI+\mathcal{S}_2)\eta,\quad 
\lambda^n=(sI-\mathcal{S}_2)\eta^n_2\quad\text{and}\quad\lambda=(sI-\mathcal{S}_2)\eta,
\end{displaymath}
which yields the representations
\begin{align*}
\eta &=\frac{\mu+\lambda}{2s},  &    \mathcal{S}_2\eta&=\frac{\mu-\lambda}{2},  & &\mathcal{S}_1\eta=\frac{\lambda-\mu}{2},  \\[2pt]
\eta^n_2 &=\frac{\mu^n+\lambda^n}{2s},  &    \mathcal{S}_2\eta^n_2&=\frac{\mu^n-\lambda^n}{2},   & &\\[2pt]
\eta^{n+1}_1 &=\frac{\mu^{n+1}+\lambda^n}{2s},  &   \mathcal{S}_1\eta^{n+1}_1&=\frac{\lambda^n-\mu^{n+1}}{2}.  & &
\end{align*}
The monotonicity of $\mathcal{S}_i$ then gives the bounds 
\begin{align*}
0 &\leq (\mathcal{S}_2\eta^n_2-\mathcal{S}_2 \eta, \eta^n_2 -\eta)_{L^2(\Gamma)}\\ 
&= \frac{1}{4s} \bigl((\mu^n-\mu)-(\lambda^n-\lambda),(\mu^n-\mu)+(\lambda^n-\lambda)\bigr)_{L^2(\Gamma)} \\
&=\frac{1}{4s} \bigl(\|\mu^n-\mu\|^2_{L^2(\Gamma)}-\|\lambda^n-\lambda\|^2_{L^2(\Gamma)}\bigr),
\end{align*}
and
\begin{align*}
0 &\leq (\mathcal{S}_1\eta^{n+1}_1-\mathcal{S}_1 \eta, \eta^{n+1}_1 -\eta)_{L^2(\Gamma)}\\ 
&= \frac{1}{4s} \bigl((\lambda^n-\lambda)-(\mu^{n+1}-\mu),(\lambda^n-\lambda)+(\mu^{n+1}-\mu)\bigr)_{L^2(\Gamma)} \\
&=\frac{1}{4s} \bigl(\|\lambda^n-\lambda\|^2_{L^2(\Gamma)}-\|\mu^{n+1}-\mu\|^2_{L^2(\Gamma)}\bigr).
\end{align*}
Putting this together yields that 
\begin{displaymath}
\|\mu^{n+1}-\mu\|^2_{L^2(\Gamma)}\leq \|\lambda^n-\lambda\|^2_{L^2(\Gamma)} \leq \|\mu^n-\mu\|^2_{L^2(\Gamma)},
\end{displaymath}
and we obtain the telescopic sum
\begin{displaymath}
0\leq \sum_{n=0}^N \bigl(\|\mu^n-\mu\|^2_{L^2(\Gamma)}-\|\mu^{n+1}-\mu\|^2_{L^2(\Gamma)}\bigr)\leq \|\mu^0-\mu\|^2_{L^2(\Gamma)}- \|\mu^{N+1}-\mu\|^2_{L^2(\Gamma)},
\end{displaymath}
i.e., $\|\mu^n-\mu\|^2_{L^2(\Gamma)}-\|\mu^{n+1}-\mu\|^2_{L^2(\Gamma)}\to 0$ as $n\to \infty$. The latter together with the bounds above imply  the sought after limits~\cref{eq:Sconv}.
\end{proof}

\begin{theorem}\label{thm:conv}
Consider the Peaceman--Rachford approximation $(\eta^{n}_{1}, \eta^{n}_{2})_{n\geq 1}$, given by~\cref{eq:PRL2}, of the Steklov--Poincaré equation $\mathcal{S}\eta=0$ in $L^2(\Gamma)$, together with the corresponding Robin--Robin approximation $(u^{n}_1, u^{n}_2)_{n\geq 1}=(F_1\eta^{n}_{1}, F_2\eta^{n}_{2})_{n\geq 1}$ of the weak solution $u=\{F_1\eta\text{ on }\Omega_1;  F_2\eta\text{ on }\Omega_2\}$ to the nonlinear elliptic equation~\cref{eq:weak}.\\
If $\eta_2^0\in \dom(\mathcal{S}_2)$ and the \cref{ass:pstruct,ass:regularity,ass:Gamma} hold, then
\begin{equation}\label{eq:conv}
\|\eta^n_1-\eta\|_{\Lambda_1}+\|\eta^n_2-\eta\|_{\Lambda_2}\to 0, \quad\text{and}\quad
\|u^n_1-u\|_{W^{1,p}(\Omega_1)}+\|u^n_2-u\|_{W^{1,p}(\Omega_2)}\to 0,
\end{equation}
as $n$ tends to infinity. 
\end{theorem}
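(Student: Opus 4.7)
The plan is to combine the $L^{2}(\Gamma)$-convergence from \cref{lemma:LM} with the strict monotonicity estimate from \cref{lemma:monotonicity} in order to transfer convergence from the interface into the subdomains. The key bridge is that for $\eta\in\dom(\mathcal{S}_i)$ the image $\mathcal{S}_i\eta=J^{-1}S_i\eta$ belongs to $L^{2}(\Gamma)$, so the defining property of the Riesz isomorphism yields
\begin{displaymath}
(\mathcal{S}_i\eta,\mu)_{L^{2}(\Gamma)}=\langle J\mathcal{S}_i\eta,\mu\rangle=\langle S_i\eta,\mu\rangle,\quad\text{for all }\mu\in\Lambda_i.
\end{displaymath}
This converts the $L^{2}(\Gamma)$ inner products in \cref{lemma:LM} into duality pairings in $\Lambda_i^{*}\times\Lambda_i$.

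First I would observe that, by \cref{lemma:etareg}, the solution of $\mathcal{S}\eta=0$ lies in $\dom(\mathcal{S}_1)\cap\dom(\mathcal{S}_2)$, while the Peaceman--Rachford iterates satisfy $\eta^{n}_{i}\in\dom(\mathcal{S}_i)$ by \cref{cor:PRL2exist}. Consequently $\eta^{n}_{i}-\eta\in\Lambda_i$, so the identity above applies with $\mu=\eta^{n}_{i}-\eta$, and the two conclusions of \cref{lemma:LM} become
\begin{displaymath}
\langle S_i\eta^{n}_{i}-S_i\eta,\,\eta^{n}_{i}-\eta\rangle\to 0\quad\text{as }n\to\infty,\quad\text{for }i=1,2.
\end{displaymath}

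Next, the strict monotonicity estimate of \cref{lemma:monotonicity} bounds the left-hand side from below by $c_i\bigl(\|\nabla(F_i\eta^{n}_{i}-F_i\eta)\|^{p}_{L^{p}(\Omega_i)^{d}}+\|F_i\eta^{n}_{i}-F_i\eta\|^{r}_{L^{r}(\Omega_i)}\bigr)$, so both summands vanish in the limit. Since the $V_i$-norm is precisely $\|\nabla\,\cdot\,\|_{L^{p}(\Omega_i)^{d}}+\|\,\cdot\,\|_{L^{r}(\Omega_i)}$ and is equivalent to the $W^{1,p}(\Omega_i)$-norm under \cref{ass:pstruct}, this yields $u^{n}_{i}=F_i\eta^{n}_{i}\to F_i\eta=u|_{\Omega_i}$ in $W^{1,p}(\Omega_i)$, i.e., the second limit in~\cref{eq:conv}. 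The first limit then follows immediately from the bound $\|\eta^{n}_{i}-\eta\|_{\Lambda_i}\leq C_i\|F_i\eta^{n}_{i}-F_i\eta\|_{V_i}$ established after \cref{lemma:Fi}.

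The main obstacle is essentially a bookkeeping one: ensuring that the limit $\eta$ belongs to $\dom(\mathcal{S}_1)\cap\dom(\mathcal{S}_2)$ (not merely to $\dom(\mathcal{S})$), so that the pointwise $L^{2}(\Gamma)$-quantity $\mathcal{S}_i\eta$ makes sense and can legitimately be subtracted inside the inner product. This propagation of regularity is exactly the content of \cref{lemma:etareg}, which is why \cref{ass:regularity} is invoked. Once this is granted, the remainder of the proof is a direct chain of the two monotonicity steps, requiring no further estimates.
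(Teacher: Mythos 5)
Your proposal is correct and follows essentially the same chain of reasoning as the paper: invoke \cref{lemma:LM}, convert the $L^2(\Gamma)$ inner products into duality pairings via the Riesz isomorphism, apply the monotonicity lower bound of \cref{lemma:monotonicity}, and then use the norm equivalences and the bound $\|\eta^n_i-\eta\|_{\Lambda_i}\leq C_i\|F_i\eta^n_i-F_i\eta\|_{V_i}$ to conclude. You spell out a bit more explicitly why $\eta\in\dom(\mathcal{S}_1)\cap\dom(\mathcal{S}_2)$ (via \cref{lemma:etareg}) and why $\eta^n_i\in\dom(\mathcal{S}_i)$ (via \cref{cor:PRL2exist}), but the argument is the same.
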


\begin{proof}
By the monotonicity bound in \cref{lemma:monotonicity}, the property that $\eta^n_i, \eta\in \dom(\mathcal{S}_i)$ and \cref{lemma:LM}, we have the limits 
\begin{align*}
c_i\bigl(\|\nabla(F_i \eta^n_i-F_i \eta)\|_{L^p(\Omega_i)^d}^p &+\|F_i \eta^n_i-F_i \eta\|_{L^r(\Omega_i)}^r\bigr)
					\leq  \langle S_i\eta^n_i-S_i\eta, \eta^n_i-\eta \rangle\\ &= (\mathcal{S}_i\eta^n_i-\mathcal{S}_i \eta, \eta^n_i -\eta)_{L^2(\Gamma)}\to 0,\quad\text{as }n\to\infty,
\end{align*}
for $i=1,2$. Hence, each of the terms $\|\nabla(F_i \eta^n_i-F_i \eta)\|_{L^p(\Omega_i)^d}$ and $\|F_i \eta^n_i-F_i \eta\|_{L^r(\Omega_i)}$ tend to zero, which  yields that
\begin{displaymath}
\|\eta^n_i-\eta\|_{\Lambda_i} \leq C \|F_i \eta^n_i-F_i \eta\|_{V_i}\to 0,\quad\text{as }n\to\infty,
\end{displaymath}
for $i=1,2$. The desired convergence~\cref{eq:conv} is then proven, as $\|\cdot\|_{V_i}$ and $\|\cdot\|_{W^{1,p}(\Omega_i)}$ are equivalent norms. 
\end{proof}

\bibliographystyle{siamplain}
\bibliography{references}
\end{document}